\DeclareMathAlphabet{\mathcalligra}{T1}{calligra}{m}{n}
\DeclareFontShape{T1}{calligra}{m}{n}{<->s*[2.2]callig15}{}
\definecolor{note}{rgb}{.80,.20,.20}
\newcommand{\mbb}[1]{\mathbf #1}
\newcommand{\mf}[1]{\mathfrak #1}
\newcommand{\mc}[1]{\mathcal #1}
\newcommand{\ms}[1]{\mathscr #1}
\def\<{\left<}
\def\>{\right>}
\newcommand{\simto}{\stackrel{\sim}{\to}}
\DeclareMathOperator{\isom}{Isom}
\DeclareMathOperator{\aut}{Aut}
\DeclareMathOperator{\Hom}{Hom}
\newcommand{\sHom}{\ms H\!om}
\newcommand{\widebar}{\overline}
\newcommand{\tensor}{\otimes}
\newcommand{\oper}{\operatorname}
\DeclareMathOperator{\Comp}{Comp}
\newcommand{\Ab}{\oper{Ab}}
\theoremstyle{plain}
\newtheorem{thm}{Theorem}[subsection]
\newtheorem{lem}[thm]{Lemma}
\newtheorem{cor}[thm]{Corollary}
\newtheorem{prop}[thm]{Proposition}
\newtheorem*{thm*}{Theorem}
\newtheorem*{question*}{Question}
\newtheorem*{lem*}{Lemma}
\newtheorem*{cor*}{Corollary}
\newtheorem*{prop*}{Proposition}
\newtheorem*{claim*}{Claim}
\theoremstyle{definition}
\newtheorem{rem}[thm]{Remark}
\newtheorem{notation}[thm]{Notation}
\newtheorem{defn}[thm]{Definition}
\newtheorem{example}[thm]{Example}
\newtheorem*{rem*}{Remark}
\newcommand{\ov}[1]{\overline{#1}}
\newcommand{\til}[1]{\widetilde{#1}}
\newcommand{\bP}{\mbb P}
\newcommand{\Z}{\mbb Z}
\newcommand{\m}{\boldsymbol{\mu}}
\newcommand{\send}{\ms E\!nd}
\DeclareMathOperator{\Br}{Br}
\DeclareMathOperator{\Pic}{Pic}
\DeclareMathOperator{\per}{per}
\DeclareMathOperator{\ind}{ind}
\DeclareMathOperator{\Spec}{Spec}
\DeclareMathOperator{\spec}{Spec}
\DeclareMathOperator{\rk}{rank}
\newcommand{\weird}{(\textbf C)}
\newcommand{\wweird}{(\textbf C\(\star\))}
\newcommand{\ul}{\underline}
\newcommand{\op}{\operatorname}
\newcommand{\LF}[1]{\op{\ul{LF}/{#1}}}
\newcommand{\FLF}[1]{\op{\ul{FLF}/{#1}}}
\newcommand{\Shv}[1]{\op{\ul{Shv}/{#1}}}
\newcommand{\Alg}[1]{\op{\ul{Alg}/{#1}}}
\newcommand{\Unitalization}{\operatorname{Un}}
\newcommand{\LFA}[1]{\op{\ul{LFAlg}/{#1}}}
\newcommand{\UMod}[1]{\op{\ul{UQC}/{#1}}}
\newcommand{\Agree}[1]{\op{\ul{Agree}/{#1}}}
\newcommand{\Rep}[2]{{{\op{Rep}}_{#1}^{#2}}}
\newcommand{\Ulr}[2]{{{\op{Ulr}}_{#1}^{#2}}}
\newcommand{\AzRep}[2]{{\op{AzRep}_{#1}^{#2}}}
\newcommand{\PrUlr}[2]{{\op{PrUlr}_{#1}^{#2}}}
\newcommand{\PrSUlr}[2]{{\op{PrSplUlr}_{#1}^{#2}}}
\newcommand{\SUlr}[2]{{\op{SplUlr}_{#1}^{#2}}}
\newcommand{\stRep}[2]{\underline{{\mc Rep}}_{#1}^{#2}}
\newcommand{\stUlr}[2]{\underline{{\mc Ulr}}_{#1}^{#2}}
\newcommand{\stSUlr}[2]{\underline{{\mc Spl\mc Ulr}}_{#1}^{#2}}
\newcommand{\stAzRep}[2]{\underline{{\mc Az\mc Rep}}_{#1}^{#2}}
\newcommand{\stPrUlr}[2]{\underline{{\mc Pr\mc Ulr}}_{#1}^{#2}}
\newcommand{\stPrSUlr}[2]{\underline{{\mc Pr\mc Spl\mc Ulr}}_{#1}^{#2}}
\newcommand{\stPic}[2]{\underline{{\mc Pic}}_{#1}^{#2}}
\newcommand{\shPic}[2]{{{\mc Pic}}_{#1}^{#2}}
\newcommand{\CFun}[1]{\ms C\!\!\ms F_{#1}}
\newcommand{\CAlg}[1]{\ms C_{#1}}
\newcommand{\Free}[1]{\op{F}\!\left<{#1}\right>}
\newcommand{\freakyideal}[1]{\ms I({#1})}
\title{The Clifford algebra of a finite morphism}
\author{Daniel Krashen}
\author{Max Lieblich}
\begin{document}

\begin{abstract}
We develop a general theory of Clifford algebras for finite morphisms of
schemes and describe several applications to the theory of Ulrich bundles
and connections to period-index problems for curves of genus 1.
\end{abstract}

\maketitle

\tableofcontents

\section{Introduction}

The goal of this paper is to develop a general theory of Clifford
algebras for finite morphisms of schemes, with a view toward the theory of Ulrich bundles and period-index theorems for genus $1$ curves.

A construction of Roby \cite{Roby}, defines a Clifford algebra, denoted $C(f)$, associated
to
a homogeneous form $f$ of degree $d$ in $n$ variables on a vector space $V$
(the classical Clifford algebra arising in the case that $d = 2$). The
behavior of this algebra is intimately connected with the geometry of the hypersurface $X$
defined by the equation $x_0 ^d - f(x_1, \ldots, x_n)$ in $\bP^n$. The
classical results leads one to believe that perhaps the Clifford algebra of
the form $f$ is intrinsic to the variety $X$.

As we explain here, the Clifford algebra of a form is really a structure
associated not to a scheme $X$, but to a finite morphism $\phi:X\to Y$, designed to (co)represent a
functor on the category of locally free algebras over the base scheme $S$.
Roughly speaking, the Clifford algebra of $\phi$ is a locally free sheaf
$\ms C$ of (not necessarily commutative) $\ms O_S$-algebras such that maps
from $\ms C$ into any locally free $\ms O_S$-algebra $\ms B$ are the same
as maps from $\phi_\ast\ms O_X$ into $\ms B|_Y$. Taking $\ms B$ to be a
matrix algebra, we see that the representations of such a $\ms C$
parameterize sheaves on $X$ with trivial pushforward to $Y$, yielding a
connection to Ulrich bundles. Making this idea work is somewhat delicate
and requires various hypotheses on $X$ and $Y$ that we describe in the
text.

In the case of the classical Clifford algebra of a form described above, if one
takes $Y$ to be the projective space $\bP^{n-1}$ and the morphism $\phi:X \to
\bP^{n-1}$ to be given by dropping the $x_0$-coordinate, then one obtains a
natural identification $C(\phi) \cong C(f)$. Our construction also
generalizes other constructions of Clifford algebras,
as in \cite{HaileHan,Kuo,ChaKuo} which do not come directly from a
homogeneous form.

\subsection{Structure of paper}
In Section~\ref{existence}, we construct the Clifford algebra of a morphism
satisfying certain conditions (see Definition~\ref{clifford definition},
Theorem~\ref{existence theorem}).  In Section~\ref{ulrich} we study the
representations of the Clifford algebra and their relations to Ulrich
bundles in a general context. In particular, we show that a
natural quotient of the Clifford algebra (the so-called ``reduced Clifford
algebra,'' of Definition~\ref{reduced clifford}) is Azumaya, and its
center is the coordinate ring
for the coarse moduli space of its representations of minimal degree (see
Theorem~\ref{good azumaya love}), generalizing results of Haile and
Kulkarni \cite{Haile:CABC, Kulk:CBi}. Sections~\ref{existence}
and~\ref{ulrich}
work over an arbitrary base scheme $S$. In Section~\ref{curves} we
specialize to the case of a finite morphism from a curve to the projective
line, extending results of \cite{Cos, Kulk:CBi} and relating the Clifford
algebra and its structure to the period-index problem for genus~$1$ curves.
Finally, in Appendix~\ref{examples}, we give more explicit constructions in
the case of morphisms of subvarieties of weighted projective varieties,
relating our construction to the more classical perspective of Clifford
algebras associated to forms.

\subsection*{Acknowledgments}

The first author was partially supported by NSF CAREER Grant DMS-1151252
and NSF Grant DMS-1007462 and the second author was partially supported by
a Sloane Research Fellowship, NSF CAREER Grant DMS-1056129 and NSF Grant
DMS-1021444, during the writing of this paper.

The first author would like to thank Valery Alexeev and Adrian Wadsworth
for useful conversations during the writing of this paper. We are also grateful for
errors pointed out in the earlier version of this paper by Baptiste Calmes as well as by the anonymous referee.

\section{General definition and existence} \label{existence}

\subsection{Notation}
Fix throughout the section a base scheme $S$. For an $S$-scheme $X$, we
will write $\pi_X : X \to S$ for the structure morphism, or simply write
$\pi = \pi_X$ if the context is clear. We write $\Shv{S}$ for the category
of sheaves of sets on $S$. For a sheaf of unital $\ms O_S$-algebras $\ms A$, we
write $\epsilon_{\ms A}: \ms O_S \to \ms A$ for the $\ms O_S$-algebra
structure map. We assume that all algebras (and sheaves of algebras) are
unital and associative. We do, however, allow the possibility of the
$0$-ring, containing a single element in which the elements $0$ and $1$
coincide.

For a quasicoherent sheaf $\mathscr N$ of $\ms O_X$ modules, we write $\ms N^\vee$ to denote the dual sheaf $\ms Hom_{\ms O_X}(\ms N, \ms O_X)$.

Let $\LF{\ms O_S}$ denote the category of locally free quasi-coherent sheaves
of $\ms O_S$ modules (with arbitrary, possibly infinite, local rank) and $\FLF{\ms O_S}$ the
category of finite locally free sheaves (i.e., those with rank an element of $\Gamma(S, \Z)$). Let $\Alg{\ms O_S}$ denote the
category of quasi-coherent sheaves of $\ms O_S$-algebras, and  $\LFA{\ms
O_S}$ denote the full subcategory of those which are locally free as $\ms
O_S$-modules.

\subsection{The Clifford functor and Clifford algebra}
Associated to a morphism $\phi$ of $S$ schemes, we will define a Clifford
functor $\CFun{\phi}$, and under certain assumptions show that it is
representable by an algebra which we refer to as the Clifford algebra
of the morphism $\phi$.

\begin{defn}
Let $\phi: X \to Y$ be a morphism of $S$-schemes. We define the
\textit{Clifford functor of $\phi$} \[\CFun{\phi}:\LFA{\ms O_S}\to\Shv{S}\] by the formula
\[\CFun{\phi}(\ms B) = \sHom_{\Alg{\ms O_Y}}(\phi_\ast \ms O_X, \pi_Y^\ast \ms B) \]
\end{defn}

This functor is not representable in general, as we will see. We show in Theorem~\ref{existence theorem} we show that under certain natural hypotheses we may find a sheaf of $\ms O_S$-algebras $\CAlg{\phi}$ such that $\CFun{\phi}(\ms B) = \sHom_{\Alg{\ms O_S}}(\CAlg\phi, \ms B)$, however in general we do not know that the algebra we construct is locally free over $\ms O_S$. On the other hand, this is clearly true in case $S$ is the spectrum of the field, and we show that, the formation of $\CAlg\phi$ commutes with pullbacks on $S$, showing that the Clifford algebra exists in certain more general settings as well.

Before we discuss the representability of this functor in more general situations, we consider a few examples. Later in \ref{examples}, we will exhibit explicit presentations for the Clifford algebra in various situations.

\begin{example}
Let $k$ be a field, $R$ a commutative $k$-algebra, free and of finite rank over $k$, and $S$ a finitely generated commutative $R$-algebra. Let $\phi: \Spec(S) \to \Spec(R)$ be the structure morphism. Then the Clifford functor of $\phi$ is representable by a $k$-algebra $C$.
\end{example}
\begin{proof}
Let $r_1, \ldots, r_n$ be a basis for $R$ over $k$ with structure coefficients $\lambda_{i,j}^p$ defined by $r_i r_j = \sum_p \lambda_{i,j}^p r_p$, and choose a presentation $S = R[t_1, \ldots, t_m]/(f_1, \ldots, f_\ell)$. We note that for any associative $k$-algebra $B$, a morphism of $R$-algebras $S \to B \otimes R$ is given by a the images of $t_i$ which are given as expressions of the form
$\beta_i = \sum_{j = 1}^n b_{i,j} \otimes r_j$, which must satisfy $f_p(\beta_1, \ldots, \beta_m) = 0$ for $p = 1, \ldots, \ell$. Expanding these expressions and examining each of the coefficients of the $r_q$ gives a number of identities, in the form of noncommutative polynomials $P_{p, q}$ in the $b_{i,j}$ which must be satisfied for this map to be a homomorphism of algebras. The Clifford algebra is then seen to be represented by the free associative algebras in the variables $x_{i,j}$ modulo the ideal generated by the polynomials $P_{p,q}$ in the $x_{i,j}$.
\end{proof}

It is good to keep in mind that when the Clifford algebra exists it need not be nonzero, as the following example shows.
\begin{example}
Let $k$ be a commutative ring, $R = k[x]$, $S = k[x, x^{-1}]$ and $\phi: \Spec(S) \to \Spec(R)$ the structure map. Then the Clifford algebra of $\phi$ exists and is the zero algebra.
\end{example}
\begin{proof}
Suppose that $B$ is an associative $k$-algebra such that we have an $R$-algebra map $S \to B \otimes R$. In particular, we must map $x^{-1}$ to some element $\sum_{i = 0}^n b_i \otimes x^i$ which should satisfy $1 = x \sum b_i \otimes x^i = \sum_{i = 0}^n b_i \otimes x^{i + 1}$. Comparing coefficients, we find that in the coefficient of $x^0$ that we need to have $0 = 1$ in $B$. Hence any such $B$ must receive a map from the $0$-ring, and must therefore be the $0$-ring itself. As all relevant maps are forced to be zero, it must also satisfy the universal property as desired.
\end{proof}

Finally, we give an example to illustrate that the Clifford algebra need not exist in general.
\begin{example}
Let $k$ be a field, $R = k[x]$, $S = k[x, y]$ and $\phi: \Spec(S) \to \Spec(R)$ the structure map. Then the Clifford algebra of $\phi$ does not exist. That is, the Clifford functor is not representable.
\end{example}
\begin{proof}
Suppose $C$ is the Clifford algebra of $\phi$ with $\psi: k[x, y] \to C \otimes_k k[x]$ the morphism associated to the identity . Such a morphism is defined by where $y$ is sent, say $y \mapsto \sum_{i = 0}^n c_i \otimes x^i$.
We claim that we can find some $k$-algebra $B$, together with a map $\gamma: k[x, y] \to B \otimes_k k[x]$ such that there is no algebra map $\rho: C \to B$ such that $\gamma = (\rho \otimes_k k[x]) \psi$.
For this, we set $B = k$ and note that a map $k[x,y] \to B \otimes_k k[x] = k[x]$ of $k[x]$-algebras is given by an arbitrary element of $k[x]$ as an image of $y$, since $k[x,y]$ is a free commutative $k[x]$-algebra with one generator, $y$. Consequently, we can choose $\gamma(y) = x^{n+1}$.
But for any $\rho: C \to B$, we find that $(\rho \otimes_k k[x]) \psi$ takes $y$ to a polynomial in $x$ of degree at most $n$, by definition of $\psi$. Consequently $\gamma$ is not of this form, and $C$ cannot represent the functor as claimed.
\end{proof}

We now describe some useful hypotheses which will help us describe the circumstances under which our Clifford algebras will exist.

\begin{defn}
We say that a sheaf of $\ms O_Y$-modules $\ms N$ is \textit{friendly} with respect to $\pi_Y = \pi$ if it is a locally free coherent sheaf of finite rank, $\pi_* (\ms N^\vee)$ is also locally free of finite rank, and if for any pullback square with $f: S' \to S$ an arbitrary morphism of schemes,
\[\xymatrix@R=.4cm@C=.6cm{
Y' \ar[d]_{\pi'} \ar[r]^{f_Y} & Y \ar[d]^{\pi} \\
S' \ar[r]_f & S
}\]
the base change map gives an isomorphism: $f^\ast \pi_\ast (\ms N^\vee) \cong (\pi')_\ast (f')^\ast (\ms N^\vee)$.
\end{defn}

We note that when $\pi$ is a flat, projective morphism
such that the map $p \mapsto \dim(\Gamma(Y_p, (\ms N^\vee)_p))$ from $S$ to $\mathbb Z$ is locally constant, then by \cite[\S III.9,~Cor~12.9]{Hartshorne} then $\pi_* \mathscr N^\vee$ is locally free on $S$ and the base change map is an isomorphism.
For example, this holds trivially if $S$ is the spectrum of a field.
We note also that the property of being friendly is itself preserved by base change with respect to morphisms $S' \to S$.

\begin{defn}[Condition \weird] \label{weird}
Let $\phi : X \to Y$ be a morphism of $S$-schemes. We say
$\phi$ has condition \weird\ if
$(\phi_\ast \ms O_X)$ and $(\phi_\ast \ms O_X) \otimes_{\ms O_Y} (\phi_\ast \ms O_X)$ are friendly with respect to $\pi_Y$.
\end{defn}

\begin{rem}
  In particular, if $S$ is the spectrum of a field, these conditions will be satisfied when $\phi$ is a finite flat morphism and $Y$ is proper over $S$.
\end{rem}

For a morphism $\phi$ with condition \weird\, an algebra
$\CAlg\phi$ is described in Definition~\ref{clifford definition}, which we will refer to as the Clifford algebra, although in general we are not able to verify our algebra is locally free over $\ms O_S$.

\begin{thm}\label{existence theorem}
If $\phi : X \to Y$ satisfies condition \weird\  then there is a sheaf of $\ms O_S$-algebras $\CAlg\phi$ and an isomorphism of functors from the category $\LFA{\ms O_S}$ to the category of sets.
\[ \sHom_{\Alg{\ms O_S}}(\CAlg\phi, \underline{\ \ }) \cong
\CFun\phi.\]
Further, if $f: S' \to S$ is a morphism of schemes and $\phi': X' \to Y'$ the fiber product of $\phi$ with $S'$ then $\phi'$ also satisfies condition \weird, and we have a canonical identification $f^\ast \CAlg\phi \cong \CAlg{\phi'}$.
\end{thm}
The proof of Theorem \ref{existence theorem} is constructive and occupies Section \ref{sec:construction} below. In particular, for a morphism $\phi$ satisfying condition \weird\ the Clifford algebra $\CAlg\phi$ is constructed in Definition~\ref{clifford definition} below.

\begin{rem}
In particular, if $\CAlg\phi$ is itself locally free (for example, in the
case $S$ is the spectrum of a field), then it represents the functor
$\CFun\phi$, and is the unique locally free algebra (up to canonical isomorphism) that
does so.
\end{rem}

\subsection{Construction of the Clifford algebra}\label{sec:construction}
To define the Clifford algebra, we first describe a construction on
quasi-coherent sheaves, having a somewhat analogous property.

\begin{notation}\label{notation:relative dual}
Given an $S$-scheme $\pi:Y \to S$, and $\ms N$ a sheaf of
$\ms O_Y$-modules, let
\[\ms N_\pi = \left(\pi_\ast \left( \ms N^\vee\right)\right)^\vee = \sHom_{\ms O_S}\bigg(\pi_\ast\big(\sHom_{\ms O_Y}(\ms N, \ms O_Y)\big), \ms O_S\bigg).\]
\end{notation}

The association $\ms N\mapsto\ms N_\pi$ defines a covariant functor from $\ms O_Y$-modules to $\ms O_S$-modules. In general, this functor has no good properties (e.g., it rarely sends quasi-coherent sheaves to quasi-coherent sheaves). We will show that, under certain assumptions, this construction has a universal property and behaves well with respect to base change.

\begin{prop} \label{clifford module}
Suppose that $\pi: Y \to S$
and $\ms N$ is a friendly sheaf of $\ms O_Y$-modules. Then:
\begin{enumerate} [1. ]
\setlength{\itemsep}{.2cm}
\item \label{adjoint}
There is a canonical natural isomorphism of functors (in $\ms M\in\LF{\ms O_S}$)
\[\Xi:
\pi_\ast \sHom_{\ms O_Y} ( \ms N, \pi^\ast\ms M)\to \sHom_{\ms O_S} (\ms N_{\pi}, \ms M).\]
In particular, there is a canonical arrow $\eta:\ms N\to\pi^\ast\ms N_\pi$ such that $\Xi$
sends an arrow $\ms N_\pi \to \ms M$ to the composition
\[ \ms N \overset{\eta}\to \pi^\ast \ms N_\pi \to \pi^\ast \ms M.\]
\item \label{pullback}
Given a pullback square
\[\xymatrix@R=.4cm@C=.6cm{
Y' \ar[d]_{\pi'} \ar[r]^{f_Y} & Y \ar[d]^{\pi} \\
S' \ar[r]_f & S
}\]
we have $f^\ast(\ms N_\pi) = (f_Y^\ast(\ms N))_{\pi'}$.
\end{enumerate}

\end{prop}

Before proving Proposition \ref{clifford module} we require a few lemmas.
\begin{lem} \label{quasicompact global}
Let $X$ be a quasi-compact topological space, $\Lambda$ a cofiltered
category, and $\ms F_\bullet : \Lambda \to \Ab_X$ a cofiltered system of
Abelian sheaves on $X$.  Set $\ms F = \displaystyle\lim_{\to} \ms F_\lambda$. Then
$\Gamma(\ms F, X) = \displaystyle\lim_{\to} \Gamma(\ms F_\lambda, X)$.
\end{lem}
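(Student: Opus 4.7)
The plan is to construct the canonical comparison map
\[\varphi : \displaystyle\lim_{\to}\Gamma(\ms F_\lambda,X)\to\Gamma(\ms F,X)\]
from the universal property of the colimit, and prove it is bijective by a stalks-plus-finite-cover argument. Surjectivity is the substantive direction; injectivity follows by a parallel but shorter argument. (I assume throughout that the system is filtered, so that the colimit is computed stalkwise in the usual way.)

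For surjectivity, I start with $s\in\Gamma(\ms F,X)$. Since on abelian sheaves stalks commute with colimits, at each $x\in X$ the germ $s_x\in\ms F_x=\displaystyle\lim_{\to}(\ms F_\lambda)_x$ lifts to a representative $(t_x,\lambda_x)$ realized by an actual section $t_x\in\ms F_{\lambda_x}(U_x)$ over some open $U_x\ni x$. After possibly shrinking $U_x$, the image of $t_x$ in $\ms F(U_x)$ agrees with $s|_{U_x}$, because two sections of a sheaf with equal germ at a point coincide on some neighborhood of that point. The quasi-compactness of $X$ then yields a finite subcover $U_1,\ldots,U_n$ with corresponding $t_i\in\ms F_{\lambda_i}(U_i)$, and filteredness of $\Lambda$ lets me push all the $t_i$ forward to a common stage $\mu\geq\lambda_i$, giving $\tilde t_i\in\ms F_\mu(U_i)$ each mapping to $s|_{U_i}$ in $\ms F(U_i)$.

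The main obstacle is now to arrange that the $\tilde t_i$ actually agree on overlaps \emph{inside some $\ms F_\nu$}, not merely after passing to the colimit. Pointwise, at each $x\in U_i\cap U_j$ the germ $(\tilde t_i-\tilde t_j)_x$ dies in $\ms F_x$, hence dies in $(\ms F_{\nu(x)})_x$ for some $\nu(x)\geq\mu$, so the difference vanishes on some open neighborhood of $x$ at stage $\nu(x)$. Promoting this pointwise vanishing to a single uniform stage requires each overlap $U_i\cap U_j$ to be quasi-compact, so that it can be covered by finitely many such opens on which the difference is killed at stages that filteredness then unifies; this amounts to an implicit quasi-separatedness on $X$, which is satisfied in the geometric applications of this lemma. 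Once such a single $\nu$ is produced, the $\tilde t_i$ agree on overlaps in $\ms F_\nu$, glue to a section of $\Gamma(\ms F_\nu,X)$ whose image in the colimit is $s$, and surjectivity is proven. Injectivity follows from the parallel observation: a section $\sigma\in\Gamma(\ms F_\mu,X)$ with zero image in $\Gamma(\ms F,X)$ vanishes in each stalk at some later stage, hence vanishes on an open neighborhood of each point at a later stage, and a finite subcover plus filteredness produces a single stage at which $\sigma$ vanishes globally.
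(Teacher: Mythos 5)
Your route is genuinely different from the paper's. You descend a given section of the sheafified colimit by hand, using stalks and a finite subcover, whereas the paper computes $\Gamma(X,\ms F)$ from the \v{C}ech-style description of sheafification as a colimit over open covers of kernels, restricts to finite covers by quasi-compactness, and then commutes the (now finite) products and the kernel past the filtered colimit in $\lambda$, finishing with the sheaf axiom for each $\ms F_\lambda$. Your injectivity argument is complete and uses only quasi-compactness of $X$. The gap is exactly where you flag it: in the surjectivity step you know only that $\tilde t_i$ and $\tilde t_j$ have equal \emph{germs} at every point of $U_i\cap U_j$, and promoting this to an equality in $\ms F_\nu(U_i\cap U_j)$ for a single $\nu$ is precisely an application of the injectivity statement to the open set $U_i\cap U_j$ --- which requires that set to be quasi-compact. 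So as written you prove the lemma only for quasi-compact spaces admitting cofinal finite covers with quasi-compact pairwise intersections, not for an arbitrary quasi-compact space as stated.

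The paper's argument avoids your obstacle because its starting point is stronger: there, an element of $\ms F(X)$ is represented by a finite cover together with sections of the \emph{presheaf} colimit $\lim_{\to}\ms F_\lambda(U_i)$ whose restrictions agree already in $\lim_{\to}\ms F_\lambda(U_i\cap U_j)$. An equality in a filtered colimit holds at some finite stage, and with only finitely many pairs $(i,j)$ filteredness alone produces a common stage; no hypothesis on the overlaps enters. (The point where a careful reader should pause in the paper's proof is instead the opening identity expressing $\ms F(X)$ as a single colimit-of-kernels, i.e., that one application of the plus construction already computes global sections of the sheafification; that is the same difficulty you met, relocated, and the standard reference for this commutation does impose your quasi-compact-overlaps condition for bijectivity.) In the paper's applications $Y$ is proper and of finite presentation over an affine base, hence quasi-compact and quasi-separated, so your extra hypothesis is harmless there; but to prove the lemma as literally stated you would need to either adopt the paper's \v{C}ech description or add the quasi-separatedness hypothesis to the statement.
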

\begin{proof}
By definition, we have that $\ms F$ is the sheafification of the presheaf
which associates to each open set $U$, the set $\displaystyle\lim_{\to} \ms F_\lambda$.
Let $\ms X$ be the category whose elements are open covers of $X$, and with
morphisms $\{U_i \subset X\}_{i \in I} \to \{V_j \subset X\}_{j \in J}$ given by
refinements -- that is by maps of sets $\phi: I \to J$ such that we have
inclusions $U_i \to V_{\phi(i)}$. This is a filtered category, via common
refinements.  By definition of the sheafification, writing $\mc U$ for a
cover $\{U_i\}_{i \in I}$, we have:
\[\ms F(X) = \lim_{\overset{\to}{\mc U \in \ms X}} \ker \left( \prod_{i \in
I} \lim_{\overset{\to}{\lambda}} \ms F_\lambda(U_i) \to \prod_{i,j \in I^2}
\lim_{\overset{\to}{\lambda}} \ms F_\lambda(U_i \cap U_j)\right). \]

Since $X$ is quasi-compact, if we set $\ms X'$ to be the subcategory of $\ms
X$ consisting of finite coverings, we find that
$\ms X'$ is coinitial in $\ms X$, and so we can take limits over $\ms X'$
instead of $\ms X$. In particular, we find that for a cover $\mc U =
\{U_i\}$ in $\ms X'$, products and coproducts (direct sums) coincide over
the finite index sets $I$ and $I^2$. Therefore, we have:

\[\ms F(X) = \lim_{\overset{\to}{\mc U \in \ms X'}} \ker \left(
\bigoplus_{i \in I} \lim_{\overset{\to}{\lambda}} \ms F_\lambda(U_i) \to
\bigoplus_{i,j \in I^2} \lim_{\overset{\to}{\lambda}} \ms F_\lambda(U_i
\cap U_j)\right). \]

In particular, since the direct sum is a colimit, it commutes with the
colimit taken over $\lambda \in \Lambda$. Since the kernel is a finite
limit, it also commutes with the cofiltered colimit in $\lambda$, and
finally, the two colimits described by $\mc U$ and $\lambda$ commute. We
therefore have

\begin{align*}
\ms F(X) &=
\lim_{\overset{\to}{\lambda \in \Lambda}} \lim_{\overset{\to}{\mc U \in \ms X'}} \ker
\left( \bigoplus_{i \in I} \ms F_\lambda(U_i) \to \bigoplus_{i,j \in I^2}
\ms F_\lambda(U_i \cap U_j)\right) \\
&=
\lim_{\overset{\to}{\lambda \in \Lambda}} \lim_{\overset{\to}{\mc U \in \ms X'}} \ker
\left( \prod_{i \in I} \ms F_\lambda(U_i) \to \prod_{i,j \in I^2}
\ms F_\lambda(U_i \cap U_j)\right) \\
&=
\lim_{\overset{\to}{\lambda \in \Lambda}} \lim_{\overset{\to}{\mc U \in \ms X}} \ker
\left( \prod_{i \in I} \ms F_\lambda(U_i) \to \prod_{i,j \in I^2}
\ms F_\lambda(U_i \cap U_j)\right) \\
&=
\lim_{\overset{\to}{\lambda \in \Lambda}} \ms F_\lambda(X)
\end{align*}
where the last equality follows from the fact that $\ms F_\lambda$ is a
sheaf.
\end{proof}

\begin{lem} \label{projection}
Let $\pi : Y \to S$ be a quasi-compact morphism, $\ms F$ a quasi-coherent
sheaf of $\ms O_Y$-modules,
and $\ms G \in \LF{\ms O_S}$. Then the natural morphism of sheaves of $\ms
O_S$-modules:
\[ \pi_\ast(\ms F) \otimes_{\ms O_S} \ms G \to \pi_\ast(\ms F \otimes_{\ms O_Y}
\pi^\ast \ms G) \]
is an isomorphism.
\end{lem}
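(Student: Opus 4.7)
The plan is to reduce to the case that $\ms G$ is a free $\ms O_S$-module of (possibly infinite) rank and then write such a module as the filtered colimit of its finite-rank free submodules. The finite-rank case will be routine, since $\pi_\ast$ preserves finite direct sums. The real work will be commuting $\pi_\ast$ with the filtered colimit, and this is precisely where the quasi-compactness of $\pi$ and the preceding Lemma~\ref{quasicompact global} will enter.

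First I would note that the statement is local on $S$, so I may assume $S$ is affine. Since $\ms G \in \LF{\ms O_S}$, after further shrinking to an affine open I may assume $\ms G \cong \bigoplus_{i \in I} \ms O_S$ for some index set $I$. Writing $\ms G$ as the filtered colimit $\varinjlim_J \bigoplus_{i \in J} \ms O_S$ over finite subsets $J \subseteq I$, and using that $\pi^\ast$ preserves colimits (as a left adjoint) and that tensor product preserves colimits in each variable, the right-hand side of the formula becomes
\[\pi_\ast\Bigl(\varinjlim_J \bigoplus_{i \in J} \ms F\Bigr).\]
On the other hand, the left-hand side distributes to $\varinjlim_J \bigoplus_{i \in J} \pi_\ast \ms F$, and the natural comparison map factors as
\[\varinjlim_J \bigoplus_{i \in J} \pi_\ast \ms F \;\cong\; \varinjlim_J \pi_\ast\Bigl(\bigoplus_{i \in J} \ms F\Bigr) \longrightarrow \pi_\ast\Bigl(\varinjlim_J \bigoplus_{i \in J} \ms F\Bigr),\]
where the first isomorphism uses that $\pi_\ast$ preserves finite direct sums.

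The central remaining step, and the one I expect to be the main obstacle, is to show that the last arrow above is an isomorphism, i.e., that $\pi_\ast$ commutes with this filtered colimit. Evaluating sections on an affine open $U \subseteq S$, this reduces to showing that $\Gamma(\pi^{-1}(U),-)$ commutes with the filtered colimit of sheaves on $\pi^{-1}(U)$. Since $\pi$ is quasi-compact, $\pi^{-1}(U)$ is quasi-compact, so this is an immediate application of Lemma~\ref{quasicompact global} to that space, and assembling the identifications gives the desired isomorphism.
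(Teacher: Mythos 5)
Your proof is correct and follows essentially the same route as the paper's: reduce locally to $\ms G\cong\ms O_S^{\oplus I}$, distribute the tensor product over the direct sum, and then identify $\pi_\ast(\ms F)^{\oplus I}$ with $\pi_\ast(\ms F^{\oplus I})$ by writing the infinite direct sum as a filtered colimit of finite ones and invoking Lemma~\ref{quasicompact global} on the quasi-compact space $\pi^{-1}(U)$. No substantive differences.
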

\begin{proof}
Since tensor product and pushforward commute with flat base change, we may work locally on $S$ and assume that
\[\ms G = \ms O_S^{\oplus I}\] for some index set $I$ (the exponent indicating direct sum indexed by the elements of $I$). We have
\[ \pi_\ast(\ms F) \otimes_{\ms O_S} \ms G  =
\pi_\ast(\ms F) \otimes_{\ms O_S} \ms O_S^{\oplus I}  =
\pi_\ast(\ms F) ^{\oplus I}\]

On the other hand, we have:
\[ \pi_\ast(\ms F \otimes_{\ms O_Y} \pi^\ast \ms G) = \pi_\ast( \ms F \otimes_{\ms
O_Y} \pi^\ast \ms O_S^{\oplus I})\]
Since tensor and direct sum commute, we have
\[\pi_\ast( \ms F \otimes_{\ms O_Y} \pi^\ast \ms O_S^{\oplus I}) =
\pi_\ast( \ms F \otimes_{\ms O_Y} \ms O_Y^{\oplus I}) = \pi_\ast(\ms F^{\oplus I})
\]

But since we can write the direct sum as a cofiltered colimit of finite
direct sums, by Lemma~\ref{quasicompact global} we can identify $\pi_\ast(\ms
F)^{\oplus I}$ with $\pi_\ast(\ms F^{\oplus I})$, completing the proof.

\end{proof}

\begin{proof}[Proof of Proposition~\ref{clifford module}]

For part~\ref{adjoint}, using the fact that $\ms N$ is finite locally free, we have
\begin{align*}
\pi_\ast \sHom_{\ms O_Y} ( \ms N, \pi^\ast\ms M) &\cong
\pi_\ast \sHom_{\ms O_Y} ( \ms O_Y, \ms N^\vee \otimes_{\ms O_Y} \pi^\ast\ms M) \\
&\cong
\pi_\ast \left(\ms N^\vee \otimes_{\ms O_Y} \pi^\ast\ms M\right)
\end{align*}
and, since $\ms M$ is locally free, we have, by Lemma~\ref{projection} that
\[ \pi_\ast \left(\ms N^\vee \otimes_{\ms O_Y} \pi^\ast\ms M\right) \cong \pi_\ast (\ms N^\vee) \otimes_{\ms
O_S}\ms M.\]
Finally, since $\pi_\ast(\ms N^\vee)$ is finite locally free, we have
\begin{align*}
\pi_\ast (\ms N^\vee) \otimes_{\ms O_S}\ms M &\cong \sHom_{\ms O_S}\left( \ms O_S,  \pi_\ast (\ms N^\vee) \otimes_{\ms O_S}\ms M\right)  \\
&\cong \sHom_{\ms O_S}\left( (\pi_\ast (\ms
N^\vee))^\vee,\ms M\right)
\\ &\cong \sHom_{\ms O_S} ( \ms N_{\pi},\ms M),
\end{align*}
as desired.

For part~\ref{pullback}, as $\ms N$ is friendly with respect to $\pi$, we have
\begin{align*}
f^\ast\pi_\ast\left(\ms N^\vee\right)
&\cong \pi'_\ast {f_Y}^\ast \left(\ms N^\vee\right) \\
&\cong \pi'_\ast {f_Y}^\ast\sHom_{\ms O_Y}\left(\ms N, \ms O_Y \right) \\
\end{align*}
and since $\ms N$ is finite locally free, we have
\begin{align*}
\pi'_\ast {f_Y}^\ast\sHom_{\ms O_Y}\left(\ms N, \ms O_Y \right)
&\cong \pi'_\ast\sHom_{\ms O_{Y'}}\left(f_Y ^\ast \ms N, \ms O_{Y'} \right) \\
&=\pi'_\ast\left(f_Y^\ast(\ms N)^\vee\right).
\end{align*}
Since $\pi_\ast(\ms N^\vee)$ is finite locally free,
\begin{align*}
f^\ast (\ms N_\pi) = f^\ast\left(\pi_\ast (\ms N^\vee)\right)^\vee
&= f^\ast \sHom_{\ms O_S} \left(\pi_\ast (\ms N^\vee), \ms O_S\right) \\
&= \sHom_{\ms O_{S'}}\left(f^\ast(\pi_\ast (\ms N^\vee)), \ms O_{S'}\right) \\
&= \sHom_{\ms O_{S'}}\left(\pi'_\ast((f_Y^\ast(\ms N))^\vee), \ms O_{S'}\right) \\
&= \pi'_\ast\left((f_Y^\ast \ms N)^\vee\right)^\vee \\
&= \left(f_Y^\ast \ms N\right)_{\pi'}
\end{align*}
as claimed.

\end{proof}

To use this module in the construction of the Clifford algebra, we first
introduce a ``relative free algebra construction:''

\begin{defn} \label{unital modules}
A \textit{unital $\ms O_S$-module} is a
quasi-coherent sheaf of $\ms O_S$-modules $\ms
N$, together with a $\ms O_S$-module morphism
$\epsilon_{\ms N} : \ms O_S \to \ms N$, referred
to as the \textit{unit morphism}. A morphism of
unital $\ms O_S$-modules is simply an $\ms
O_S$-module morphism which commutes with the unit
morphisms. We let $\UMod{\ms O_S}$ denote the
category of unital $\ms O_S$-modules.
\end{defn}

\begin{notation}\label{unitalization}
For any scheme $Z$, let
$$\Unitalization:\Alg{\ms O_Z}\to\UMod{\ms O_Z}$$ denote the canonical forgetful functor that sends an $\ms O_Z$-algebra $\ms A$ to the unital module given by the underlying $\ms O_Z$-module of $\ms A$ together with the identity element $\ms O_Z\to\ms A$.
\end{notation}

The functor $\Unitalization$ has a left adjoint.
We note that there is a forgetful map from the category of quasi-coherent
sheaves of $\ms O_S$-algebras to the category of unital $\ms O_S$-modules.
The left adjoint to this is constructed as follows:

\begin{lem} \label{free algebra}
Let $S$ be a scheme. There is a ``free algebra''
functor
$$\ms N, \epsilon\mapsto\Free{\ms
N,\epsilon}:\UMod{\ms O_S}\to\Alg{\ms O_S}$$
that is left adjoint to $\Unitalization$.
Moreover, for a morphism $f : T \to S$, we have
$$f^\ast \Free{\ms N, \epsilon} = \Free{f^\ast \ms N,
f^\ast\epsilon}.$$
\end{lem}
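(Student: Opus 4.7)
The plan is to adapt the classical tensor algebra construction to incorporate the prescribed unit. First I would form the tensor algebra $T(\ms N) := \bigoplus_{n\geq 0}\ms N^{\otimes n}$ as an associative $\ms O_S$-algebra with concatenation product, which carries its own unit $\iota : \ms O_S = \ms N^{\otimes 0} \hookrightarrow T(\ms N)$. Since the given $\epsilon_{\ms N} : \ms O_S \to \ms N$ yields a second ``would-be unit'' inside $T(\ms N)$ via the degree-one inclusion, I define
\[\Free{\ms N,\epsilon} := T(\ms N)/\ms J,\]
where $\ms J$ is the two-sided ideal sheaf generated by the image of $\epsilon_{\ms N}-\iota : \ms O_S \to T(\ms N)$. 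The composite $\ms N \to T(\ms N) \to \Free{\ms N,\epsilon}$ is then a morphism of unital $\ms O_S$-modules and will serve as the unit of the adjunction.

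To verify the adjunction, I would unpack a unital-module map $g : \ms N \to \Unitalization(\ms A)$ as an $\ms O_S$-linear map with $g\circ\epsilon_{\ms N} = \epsilon_{\ms A}$. The universal property of the tensor algebra extends $g$ uniquely to an $\ms O_S$-algebra map $\widetilde g : T(\ms N)\to\ms A$, and the unit compatibility forces $\widetilde g(\epsilon_{\ms N}(1)) = 1_{\ms A} = \widetilde g(\iota(1))$, so $\widetilde g$ annihilates the generators of $\ms J$ and factors uniquely through $\Free{\ms N,\epsilon}$. Conversely, any algebra map out of $\Free{\ms N,\epsilon}$ restricts along $\ms N \to \Free{\ms N,\epsilon}$ to a map of unital $\ms O_S$-modules, yielding the asserted bijection, natural in both variables.

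For the compatibility with pullback along $f : T \to S$, I would exploit that $f^\ast$ is a symmetric monoidal left adjoint on quasi-coherent sheaves: it preserves arbitrary direct sums, tensor products, and cokernels. Hence $f^\ast T(\ms N) \cong T(f^\ast\ms N)$ canonically as $\ms O_T$-algebras, and $f^\ast$ carries the morphism $\epsilon_{\ms N}-\iota$ to the analogous morphism $f^\ast\epsilon_{\ms N}-\iota'$. Since the two-sided ideal generated by the image of a map into $T(\ms N)$ is itself the image of the morphism $T(\ms N)\otimes_{\ms O_S}\ms O_S\otimes_{\ms O_S}T(\ms N) \to T(\ms N)$ sending $a\otimes s\otimes b$ to $a\cdot(\epsilon_{\ms N}(s)-\iota(s))\cdot b$, and $f^\ast$ preserves such images, we obtain $f^\ast \Free{\ms N,\epsilon} \cong \Free{f^\ast\ms N, f^\ast\epsilon}$.

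The main nuisance I anticipate is the bookkeeping for ``the two-sided ideal generated by the image of a morphism'' at the level of quasi-coherent sheaves rather than sections of rings, and in particular checking that this construction commutes with $f^\ast$; but this is routine once one describes the ideal as the image of an explicit morphism of sheaves, as above. No hypothesis on $\ms N$ beyond being a quasi-coherent unital $\ms O_S$-module is needed, consistent with the statement.
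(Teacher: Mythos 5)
Your proof is correct and is essentially the paper's argument: both constructions take the tensor algebra on $\ms N$ modulo the ideal forcing the prescribed unit $\epsilon_{\ms N}$ to agree with the tensor algebra's unit, and deduce base-change compatibility from the fact that $f^\ast$ commutes with tensor algebras and with forming the quotient (the paper phrases this affine-locally via sheafification of a presheaf, while you work globally with sheaves, noting that the quotient is a cokernel of an explicit morphism and hence preserved by $f^\ast$).
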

We will usually omit $\epsilon$ from the notation and write $\Free{\ms N}$.
\begin{rem} \label{free counit}
The counit of the adjunction yields a
canonical morphism
$$\xi : \ms N \to \Unitalization\Free{\ms N}$$ of
unital $\ms O_S$-modules
such that a morphism of $\ms O_S$-algebras
$\Free{\ms N} \to \ms B$ is
associated to the composition of $\ms O_S$-module
maps $\ms N \to \Unitalization\Free{\ms N}
\to\Unitalization\ms B$.
\end{rem}
\begin{proof}
We construct $\Free{\ms N}$ as the sheafification of a presheaf as follows.
For an affine open set $U = \Spec R \subset S$, write $\ms N(U) = N$, and
$\iota(U) = i : R \to N$. We consider the algebra $\Free{N}$ to be the free
associative $R$-algebra $R\left<N\right>$ (the tensor algebra) modulo the
ideal $I$ generated by the expressions of the form $i(r) - r$ where $r \in
R$, the element $r$ being viewed on the right as taken from the
coefficients of the tensor algebra. It is clear that this presheaf of algebras
has the corresponding universal property among presheaves of algebras, and
hence by the universal property of sheafification the resulting sheafified
algebra has the correct universal property as well.

The assertion concerning the behavior under pullback will follows from the
fact that, on the level of affine schemes, this description is preserved by
tensor products with respect to a homomorphism of rings $R \to R'$ and the
construction of the tensor algebra commutes with
base change to
$R'$.

\end{proof}

In classical constructions of the Clifford algebra of a homogeneous form
(see, for example \ref{examples}), the Clifford algebra is defined
as a free associative algebra, generated by an underlying vector space of
the form, modulo a certain ideal. The
construction above gives an analog of this free
algebra; we will now describe the construction of
the corresponding ideal in the relative case.

\begin{defn}
Let $\pi : Y \to S$ be a morphism of schemes, $\ms A$ a quasi-coherent sheaf
of $\ms O_Y$-algebras. An \textit{agreeable algebra for $\ms A$} is a
quasi-coherent sheaf of $\ms O_S$-algebras $\ms B$ together with a morphism
$\upsilon_{\ms B} : \ms A \to \pi^\ast \ms B$ of unital $\ms O_Y$-modules in
the sense of Definition~\ref{unital modules}.
A morphism of agreeable algebras is a morphism of sheaves of algebras $f : \ms
B \to \ms D$ such that we have a commutative diagram
\[\xymatrix@R=.3cm@C=2cm{
 & \pi^\ast \ms B \ar[dd]^{\pi^\ast f} \\
\ms A \ar[ru]^{\upsilon_{\ms B}} \ar[rd]_{\upsilon_{\ms D}} \\
 & \pi^\ast \ms D.
}\]
We let $\Agree{\ms A}$ denote the
category of agreeable algebras for $\ms A$. We say that an agreeable map $f: \ms B \to \ms D$ is a \textit{compromise} for $(\ms B, \upsilon_{\ms B})$ if $\upsilon_{\ms D}: \ms A \to \pi^{\ast} \ms D$ is a morphism of $\ms O_Y$-algebras. We write $\Comp_{\ms A}(\ms B, \ms D)$ for the set of compromises from $(\ms B, \upsilon_{\ms B})$ to $\ms D$.
\end{defn}

\begin{lem} \label{freaky ideal sheaf}
Let $\pi : Y \to S$ be a morphism of schemes, $\ms A$ a sheaf of $\ms O_Y$-algebras, such that $\ms A$ and $\ms A \otimes_{\ms O_Y} \ms A$ are friendly with respect to $\pi$.
Let $\ms B$,
$\upsilon: \ms A \to \pi^\ast \ms B$ an agreeable algebra for $\ms A$. Then
there is a sheaf of ideals $\freakyideal\upsilon \triangleleft \ms B$ such
that
\begin{enumerate}[1. ]
\item \label{freaky agreeable}
the induced map $\ms A \to \pi^\ast \left(\ms B/\freakyideal\upsilon\right)$ is a map of $\ms O_Y$-algebras, that is, it is a compromise for $(\ms B, \upsilon)$,
\item \label{freaky smallest}
for any other quasi-coherent sheaf of ideals $\ms J \triangleleft \ms B$
such that
the induced map $\ms A \to \pi^\ast \left(\ms B/\ms J\right)$ is a map of $\ms O_Y$-algebras,
$\freakyideal\upsilon \subset \ms J$,
\item \label{freaky adjoint}
for any quasi-coherent sheaf of $\ms O_S$-algebras $\ms D$, we have a
natural bijection
\[
\Comp_{\ms A}(\ms B, \ms D) = Hom_{\Alg{\ms
O_S}}(\ms B/\freakyideal\upsilon, \ms D)\]
\item \label{freaky natural}
for $f: T \to S$ a morphism of schemes, we have $f^\ast \left(\ms
B/\freakyideal\upsilon\right) \cong (f^\ast\ms B)/\freakyideal{f^\ast \upsilon}$
\end{enumerate}
\end{lem}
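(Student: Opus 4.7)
The approach is to build $\freakyideal\upsilon$ from an explicit universal characterization. Introduce the \emph{defect} morphism of $\ms O_Y$-modules
\[m_\upsilon : \ms A \otimes_{\ms O_Y} \ms A \to \pi^\ast \ms B, \qquad a \otimes a' \mapsto \upsilon(aa') - \upsilon(a)\upsilon(a'),\]
which measures the failure of $\upsilon$ to be an $\ms O_Y$-algebra homomorphism. For any two-sided ideal $\ms J \triangleleft \ms B$, the induced map $\ms A \to \pi^\ast(\ms B/\ms J)$ is an algebra homomorphism exactly when $m_\upsilon$ composes to zero with $\pi^\ast \ms B \to \pi^\ast(\ms B/\ms J)$; by right-exactness of $\pi^\ast$, this is the condition that the image of $m_\upsilon$ lie in the image of $\pi^\ast \ms J \to \pi^\ast \ms B$. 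Define $\freakyideal\upsilon$ to be the smallest ideal with this property.

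To realize this smallest ideal concretely, I work locally. On open $U \subseteq S$ and $V \subseteq \pi^{-1}(U)$ chosen small enough that $\ms A$ is free over $V$ with basis $\{u_i\}$, each section $m_\upsilon(u_i \otimes u_j) \in \pi^\ast \ms B(V)$ admits, after further shrinking if necessary, a presentation as a finite sum $\sum_k f_{ijk} \otimes b_{ijk}$ with $f_{ijk} \in \ms O_Y(V)$ and $b_{ijk}$ a section of $\ms B$ on an open subset of $S$ containing $\pi(V)$. The openness of $\pi$ enters exactly here: $\pi(V)$ itself is open in $S$, so the coefficients $b_{ijk}$ live honestly in $\ms B$ on an open subset of $U$ rather than in some obscure colimit. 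Take $\freakyideal\upsilon$ to be the sheafification of the presheaf on $S$ assigning to $U$ the two-sided ideal in $\ms B(U)$ generated by all such coefficients $b_{ijk}$ as $V$, the trivialization, and the presentation vary.

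Properties (1) and (2) are immediate from the minimality built into the construction. Property (3) follows from the observation that an $\ms O_S$-algebra morphism $f : \ms B \to \ms D$ is a morphism in $\Agree{\ms A}$ precisely when $\pi^\ast f \circ \upsilon$ is an $\ms O_Y$-algebra homomorphism, equivalently when $\pi^\ast f \circ m_\upsilon = 0$, equivalently when $\freakyideal\upsilon \subseteq \ker f$. Property (4) follows by checking that $f_Y^\ast m_\upsilon$ is the defect map of $f^\ast \upsilon$ on $T$ and that the local coefficient data transform correctly under pullback, so that $f^\ast \freakyideal\upsilon$ satisfies the minimality property characterizing $\freakyideal{f^\ast \upsilon}$. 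The main obstacle is establishing well-definedness in the construction: different presentations of the same section in $\pi^\ast \ms B$ differ by the defining relations of $\pi^{-1}\ms B \otimes_{\pi^{-1}\ms O_S} \ms O_Y$, and one must verify that these relations generate the same two-sided ideal in $\ms B$, independent of local choices. The openness hypothesis on $\pi$ is precisely what allows the coefficients to be captured as honest sections of $\ms B$; the absence of any flatness hypothesis is precisely what forces this explicit coefficient construction, since without flatness the set of ideals $\ms J$ with the required property need not be closed under intersection, and a naive intersection approach fails.
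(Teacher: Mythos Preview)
Your approach matches the paper's: both construct $\ms I(\upsilon)$ as the two-sided ideal of $\ms B$ generated by local $\ms B$-coefficients appearing in expressions for the defects $\upsilon(aa')-\upsilon(a)\upsilon(a')$, using openness of $\pi$ to ensure those coefficients live as honest sections of $\ms B$ over opens of $S$; the deductions of parts (1)--(4) then proceed identically. Your packaging via the defect map $m_\upsilon$ is a clean way to phrase what the paper does more implicitly.

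One caution on your wording: do not let the \emph{presentation} range freely when collecting coefficients. Any section of $\pi^\ast\ms B$ (including $0$) admits a presentation $1\otimes b + 1\otimes(-b)$, so quantifying over all presentations forces every $b\in\ms B$ into the ideal and collapses $\ms B/\ms I(\upsilon)$ to zero. The intended construction is to fix, for each $(V,\text{trivialization},i,j)$, \emph{one} presentation of $m_\upsilon(u_i\otimes u_j)$ and generate by those coefficients; the well-definedness check you flag as ``the main obstacle'' is then precisely that the resulting ideal is independent of these choices. Your observation that an intersection-of-kernels argument would require flatness of $\pi$ (to make $\pi^\ast$ preserve the inclusion $\ms B/\bigcap\ms J_\alpha \hookrightarrow \prod \ms B/\ms J_\alpha$) is correct and explains why the explicit coefficient description is needed in the stated generality.
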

\begin{proof}[Proof of Lemma~\ref{freaky ideal sheaf}]
Let $(\ms D, \upsilon_{\ms D})$ be an agreeable algebra for $\ms A$.
Via the multiplication maps on $\ms D$ and $\ms A$, we may define two maps of $\ms O_Y$-modules:
\begin{align*}
  m_1^{\ms D}: \ms A \otimes_{\ms O_Y} \ms A &\overset{m_{\ms A}}\longrightarrow \ms A \overset{\upsilon_{\ms D}}\to \pi^\ast \ms D \\
  m_2^{\ms D}: \ms A \otimes_{\ms O_Y} \ms A &\overset{\upsilon_{\ms D} \otimes \upsilon_{\ms D}}\longrightarrow \pi^\ast \ms D \otimes_{\ms O_Y} \pi^\ast \ms D =
  \pi^\ast(\ms D \otimes_{\ms O_S} \ms D) \overset{\pi^\ast m_D}\longrightarrow \pi^\ast \ms D
\end{align*}
and by definition, $\upsilon_{\ms D}$ is a map of $\ms O_Y$-algebras if $m_1^{\ms D} = m_2^{\ms D}$. Consider the difference
$\delta_{\ms D} = m_1^{\ms D} - m_2^{\ms D} \in \sHom_{\ms O_Y}(\ms A \otimes_{\ms O_Y} \ms A, \pi^\ast {\ms D})$.
It follows from the construction that this is natural in the sense that if we are given $f: (\ms D, \upsilon_{\ms D}) \to (\ms D', \upsilon_{\ms D'})$, a morphism of agreeable algebras for $\ms A$,
then we have a commutative diagram
\[\xymatrix{
\ms A \otimes \ms A \ar[r]^{\delta_{\ms D}} \ar[rd]_{\delta_{\ms D'}} & \pi^\ast \ms D \ar[d]^{\pi^\ast f} \\
 & \pi^\ast \ms D'
}\]
and we see that a morphism $(\ms D, \upsilon_{\ms D}) \to (\ms D', \upsilon_{\ms D'})$ is a compromise (i.e., $\upsilon_{\ms D'}$ is an $\ms O_Y$-algebra map) if and only if $\delta_{\ms D'} = 0$.

Since $\ms A$ and $\ms A \otimes_{\ms O_Y} \ms A$ are both friendly, given a morphism as before $f: (\ms D, \upsilon_{\ms D}) \to (\ms D', \upsilon_{\ms D'})$ of agreeable algebras for $\ms A$, the diagram above corresponds to a diagram of $\ms O_S$-modules:
\begin{equation}\label{freaky ideal natural}
  \xymatrix{
(\ms A \otimes \ms A)_\pi \ar[r]^-{(\delta_{\ms D})_\pi} \ar[rd]_-{(\delta_{\ms D'})_\pi} & \ms D \ar[d]^{ f} \\
 & \ms D'
}
\end{equation}
and in particular, $\upsilon_{\ms D}$ is an $\ms O_Y$-algebra map if and only if the map $(\delta_{\ms D})_\pi$ is the zero map. Define $\freakyideal{\upsilon_{\ms D}}$ to be the sheaf is ideals generated by the image of $(\delta_{\ms D})_\pi$. It follows from that construction that for any morphism $f: (\ms D, \upsilon_{\ms D}) \to (\ms D', \upsilon_{\ms D'})$ of agreeable algebras, we have
$f(\freakyideal{\upsilon_{\ms D}}) \subset \freakyideal{\upsilon_{\ms D'}}$.
In particular, we find that considering the quotient map $(\ms B, \upsilon) \to (\ms B/\freakyideal{\upsilon}, \overline{\upsilon})$ we obtain Part~\ref{freaky adjoint}, and consequently
this quotient map is a compromise, verifying part~\ref{freaky agreeable}.

Part~\ref{freaky smallest} follows from the fact that for an ideal sheaf $\ms J \triangleleft \ms B$, we have seen that the induced map $(\ms B, \upsilon) \to (\ms B/\ms J, \til\upsilon)$ is a compromise if and only if $(\delta_{\til\upsilon})_\pi$ is zero,
which by the commutativity of diagram~\eqref{freaky ideal natural} happens if and only if $\freakyideal{\upsilon} \subset \ms J$ as desired.

Finally, part~\ref{freaky natural} follows from the fact that the formation of the image sheaf theoretic image commutes with base change, as does the operation of an ideal sheaf generated by a subsheaf.
\end{proof}

We are now prepared to give the definition of the Clifford algebra.
\begin{defn} \label{clifford definition}
Let $\phi: X \to Y$ be a morphism of $S$-schemes satisfying condition
\weird (see Notation \ref{weird}). By Proposition~\ref{clifford module}(\ref{adjoint}), we have a
morphism \[\eta: \phi_\ast \ms O_X \to \pi^\ast \left(\left(\phi_\ast \ms
O_X\right)_\pi\right)\]
Which we can consider as a map of unital $\ms O_Y$-modules via the structure map $\ms O_Y \to \phi_* \ms O_X$.
Consider the counit map
\[\xi: \pi^\ast\left( \left(\phi_\ast \ms O_X\right)_\pi\right) \subset
\Free{\pi^\ast\left( \left(\phi_\ast \ms O_X\right)_\pi\right)} = \pi^\ast
\Free{\left(\phi_\ast \ms O_X\right)_\pi}\] of
Remark~\ref{free counit}, and let $\upsilon_\phi$ be the composition:
\[\xymatrix{
\phi_\ast \ms O_X \ar[r]_-{\eta} \ar@/^1.5pc/[rr]^{\upsilon_\phi} & \pi^\ast
\left(\phi_\ast \ms O_X\right)_\pi \ar[r]_-{\xi} &
\pi^\ast \Free{\left(\phi_\ast \ms O_X\right)_\pi}
}\]
Letting $\freakyideal{\upsilon_{\phi}} \triangleleft \Free{\left(\phi_\ast \ms
O_X\right)_\pi}$ be the ideal as defined in Lemma~\ref{freaky
ideal sheaf}, we
then define the Clifford algebra $\CAlg\phi$ to be the sheaf of $\ms
O_S$-algebras given by
\[ \CAlg\phi =
\Free{\left(\phi_\ast \ms O_X\right)_\pi}/ \freakyideal{\upsilon_\phi}\]
\end{defn}

We now show that this construction behaves well with respect to pullbacks.

\begin{lem} \label{clifford base change}
Let $\phi: X \to Y$ be a morphism of $S$-schemes satisfying condition
\weird.  If $f: T \to S$ is any morphism, and $\phi_T : X_T \to Y_T$ the
pullback morphism, then there is a natural isomorphism of sheaves of
algebras
\[f^\ast \CAlg\phi \cong \CAlg{\phi_T}.\]
\end{lem}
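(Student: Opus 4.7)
The plan is to apply $f^\ast$ to the explicit description
\[ \CAlg\phi = \Free{(\phi_\ast\ms O_X)_\pi}\big/\freakyideal{\upsilon_\phi}\]
from Definition~\ref{clifford definition} and track how each ingredient transforms, invoking the base-change compatibilities we already have in hand.

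First, I would check that $\phi_T : X_T \to Y_T$ itself satisfies condition \weird, so that $\CAlg{\phi_T}$ is defined and the statement has content. Properness, flatness, finite presentation, finite locally-freeness, and surjectivity are all stable under arbitrary base change. Condition \weird(3) applied with respect to $f$ itself yields a canonical isomorphism $f^\ast \pi_\ast((\phi_\ast\ms O_X)^\vee) \cong \pi_{T\ast}((\phi_{T\ast}\ms O_{X_T})^\vee)$, which lies in $\FLF{\ms O_T}$; iterating this identification along further base changes $T' \to T$ shows that the analogous condition holds for $\phi_T$ over $T$. In particular $(\phi_\ast\ms O_X)^\vee$ is friendly with $T$, so Proposition~\ref{clifford module}(\ref{pullback}) produces a canonical isomorphism
\[f^\ast\bigl((\phi_\ast\ms O_X)_\pi\bigr)\cong \bigl((\phi_T)_\ast\ms O_{X_T}\bigr)_{\pi_T}.\]

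Next I would chain together the three available base-change compatibilities: Lemma~\ref{freaky ideal sheaf}(\ref{freaky natural}) for the quotient by the freaky ideal, Lemma~\ref{free algebra} for the free algebra functor, and the identification just displayed. This yields
\[f^\ast\CAlg\phi \;\cong\; \Free{f^\ast(\phi_\ast\ms O_X)_\pi}\big/\freakyideal{f^\ast\upsilon_\phi} \;\cong\; \Free{((\phi_T)_\ast\ms O_{X_T})_{\pi_T}}\big/\freakyideal{f^\ast\upsilon_\phi},\]
so everything reduces to checking that, under the canonical identifications produced above, $f^\ast\upsilon_\phi$ corresponds to $\upsilon_{\phi_T}$. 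Once that is established, the right-hand side equals $\CAlg{\phi_T}$ by Definition~\ref{clifford definition}, and the naturality of the isomorphism in $f$ is built into each individual naturality statement used.

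I expect the main obstacle to be this last identification, since $\upsilon_\phi$ is assembled from the counit $\xi$ of the $\Free \dashv \Unitalization$ adjunction composed with the canonical map $\eta : \phi_\ast\ms O_X \to \pi^\ast(\phi_\ast\ms O_X)_\pi$ from Proposition~\ref{clifford module}(\ref{adjoint}). Naturality of $\xi$ under $f^\ast$ is immediate from Lemma~\ref{free algebra}. For $\eta$, I would inspect the construction of the adjunction isomorphism $\Xi$ in the proof of Proposition~\ref{clifford module}(\ref{adjoint}): it is built from the evaluation pairing for $\ms N$ finite locally free, the projection formula of Lemma~\ref{projection}, and double-duality for $\pi_\ast(\ms N^\vee)$. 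Each of these steps commutes with $f^\ast$ under the friendliness hypothesis of \weird, so $f^\ast\eta$ is carried to $\eta_{\phi_T}$ by the identifications of Proposition~\ref{clifford module}(\ref{pullback}). Composing the two naturalities identifies $f^\ast\upsilon_\phi$ with $\upsilon_{\phi_T}$, completing the proof.
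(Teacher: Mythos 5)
Your proposal is correct and follows essentially the same route as the paper's proof: commute $f^\ast$ past $\Free{-}$ via Lemma~\ref{free algebra}, identify $f^\ast((\phi_\ast\ms O_X)_\pi)$ with $((\phi_T)_\ast\ms O_{X_T})_{\pi_T}$ via Proposition~\ref{clifford module}(\ref{pullback}), and conclude with Lemma~\ref{freaky ideal sheaf}(\ref{freaky natural}). You are in fact more careful than the paper on two points it leaves implicit --- that $\phi_T$ again satisfies \weird\ and that $f^\ast\upsilon_\phi$ matches $\upsilon_{\phi_T}$ under these identifications --- but the argument is the same.
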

\begin{proof}
By Lemma~\ref{free algebra}, we have \[f^\ast \Free{\left(\phi_\ast \ms
O_X\right)_\pi} = \Free{f^\ast \left(\left(\phi_\ast\ms O_X\right)_\pi\right)}.\]
Since the hypotheses imply that $(\phi_\ast \ms O_X)_\pi$ is finite and
locally free, it follows from Proposition~\ref{clifford module}(\ref{pullback}) that \[f^\ast \Free{\left(\phi_\ast \ms
O_X\right)_\pi}
=
\Free{\left((\phi_T)_\ast \ms O_{X_T}\right)}.\] The result now follows from
Lemma~\ref{freaky ideal sheaf}(\ref{freaky natural}).
\end{proof}

We now prove that the algebra constructed above
has the desired properties.
\begin{proof}[Proof of Theorem \ref{existence
theorem}]
The fact that the hypotheses are preserved by base change follows from the analogous fact for friendly sheaves,
and the fact that the construction is preserved by base change follows from Lemma~\ref{clifford base change}.
Without loss of generality, via changing the base, we may reduce to the case $S$ is affine and hence check this
by simply comparing global sections on each side. We have
\[\Hom_{\ms O_Y\text{-alg}}(\phi_\ast \ms O_X, \pi^\ast \ms B) =
\{ \rho \in \Hom_{\ms O_Y}(\phi_\ast \ms O_X, \pi^\ast \ms B) \mid \rho \text{ an
alg.\ hom.}\}\]
and using Proposition~\ref{clifford module}(\ref{adjoint}), this is identified
with
\[\{ \psi \in \Hom_{\ms O_S}((\phi_\ast \ms O_X)_\pi, \ms B) \mid \ms \phi_\ast
\ms O_X \overset\eta\to \pi^\ast((\phi_\ast\ms O_X)_\pi) \overset{\psi}{\to}
\pi^\ast \ms B  \text{ an alg.\ hom.}\}\]
using Lemma~\ref{free algebra}, this is in bijection with
\[\{ \til\psi \in \Hom_{\ms O_S\text{-alg}}(\Free{(\phi_\ast \ms O_X)_\pi}, \ms B)
\mid \ms \phi_\ast \ms O_X \overset{\upsilon_\phi}\to \Free{\pi^\ast((\phi_\ast\ms
O_X)_\pi)} \overset{\psi}{\to} \pi^\ast \ms B  \text{ an alg.\ hom.}\}\]
which we can identify with $\Comp_{\phi_\ast \ms O_X}(\Free{\pi^\ast((\phi_\ast\ms
O_X)_\pi)}, \ms B)$.
Finally, by Lemma~\ref{freaky ideal sheaf}(\ref{freaky adjoint}), this is
in bijection with
\[
\Hom_{\ms O_S\text{-alg}}(\Free{\pi^\ast((\phi_\ast \ms O_X)_\pi)}/\ms
I_{\upsilon_\phi}, \ms B) = \Hom_{\ms O_S\text{-alg}}(\CAlg\phi, \ms B)\]
as desired.
\end{proof}

\section{Representations and Ulrich bundles} \label{ulrich}

Having discussed the Clifford algebra, in this section we describe its
representations. Since we will be considering a morphism of $S$-schemes
$\phi: X \to Y$ and representations on sheaves of $\ms O_S$-modules, there
are pullback functors on categories of representations induced by base
changes $T \to S$. Consequently, the various categories of representations,
as the base changes, will fit together into a stack, and this will be the
natural way to describe the arithmetic and geometry of these
representations.

For considering representations of Clifford algebras, again we fix a base
scheme $S$, and it will be useful to restrict to only certain morphisms
$\phi : X \to Y$ of $S$-schemes. We strengthen condition \weird\ to also
require our base variety to be proper and have connected fibers:
\begin{defn} \label{wweird}
Let $\phi : X \to Y$ be a morphism of $S$-schemes. We say that $\phi$ has
condition \wweird\ if
\begin{enumerate}[\ \ 1. ]
\item
$\pi_Y$ is proper, flat, and of finite presentation;
\item
$\phi$ is finite locally free and surjective;
\item
$\ms O_Y$, $\phi_\ast \ms O_X$ and $\phi_\ast \ms O_X \otimes_{\ms O_Y} \phi_\ast \ms O_X$ are all friendly with respect to $\pi_Y$.
\end{enumerate}
\end{defn}

Note that $\ms O_Y$ being friendly with respect to $\pi_Y$ would be implied by $\pi_Y$ being cohomologically flat in dimension $0$.
In applications, $S$ will often be the spectrum of a field and $Y$ will be a geometrically integral proper variety over $S$.

\begin{defn}
Let $S$ be a scheme, and $\ms A$ a sheaf of $\ms O_S$-algebras. We define
$\Rep{\ms A}{}$ to be the category whose objects are pairs $(f, W)$ where
$W$ is a locally free sheaf of $\ms O_S$-modules of finite rank, and where
$f : \ms A \to \send_{\ms O_S}(W)$ is a homomorphism of $\ms
O_S$-algebras. A morphism $(f, W) \to (g, U)$ in $\Rep{\ms A}{}$ is a morphism of $\ms
O_S$-modules $\lambda: W \to U$ such $\lambda(f(a)(w)) = g(a)(\lambda(w))$ for sections $w, a$ of $W$ and $\ms A$ respectively. We let $\Rep{\ms A}{n}$ denote the subcategory of pairs $(f, W)$
where $W$ has rank $n$ over $\ms O_S$.
\end{defn}

From these, we obtain stacks $\stRep{\ms A}{}$ (respectively $\stRep{\ms
A}{n}$) defined over $S$ which associates to $U \to S$ the (isomorphisms in
the) category $\Rep{\ms A_U}{}$ (respectively $\Rep{\ms A_U}{n}$).
In the case of a morphism $\phi: X \to Y$ of $S$-schemes, we simply write
$\Rep\phi{}$ and $\Rep\phi{n}$ to denote $\Rep{\CAlg\phi}{}$ and
$\Rep{\CAlg\phi}{n}$ respectively.

\begin{defn}
Let $\phi : X \to Y$ be a morphism of $S$-schemes. We say that a coherent
sheaf $V$ of $\ms O_X$-modules is \emph{Ulrich for $\phi$} if there is a fppf covering $S'\to S$ (that is, a surjective flat map, locally of finite presentation) and a section $r\in H^0(S', \Z)$ such that \[\phi_\ast V_{S'}\cong\pi_{Y_{S'}}^\ast\ms O_{S'}^{\oplus r}.\] The Ulrich sheaves
(respectively, the Ulrich sheaves with pushforward of rank $m$ for a fixed integer $m$) form a full subcategory of
the category of coherent sheaves on $X$, which we denote by $\Ulr\phi{}$
(respectively $\Ulr\phi{m}$).
\end{defn}

Ulrich sheaves form a substack $\stUlr\phi{}$ of the
$S$-stack of coherent sheaves on $X$ by setting $\stUlr\phi{}(U) = \Ulr{\phi_U}{}$,
for $U$ an $S$-scheme. (Of course, one should add additional hypotheses, such as flatness over the base, if one hopes to produce an algebraic stack but this is inessential here.)

The following proposition is a generalization of \cite[Proposition~1]{VdB:Lin}.
\begin{prop} \label{Rep equal Ulr}
Let $\phi: X \to Y$ be a morphism satisfying condition \wweird. Then
there is an isomorphism of stacks
$\theta: \stRep{\phi}{} \to \stUlr\phi{}$, inducing equivalences of
categories $\Rep{\phi}{} \cong \Ulr\phi{}$. In the case that $\phi$ has
constant degree $d$, this gives for every positive integer $m$ an
equivalence $\Rep\phi{dm} \cong \Ulr\phi{m}$. In particular, every
representation of the Clifford algebra has rank a multiple of $d$.
\end{prop}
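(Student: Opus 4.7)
The plan is to promote the universal property of $\CAlg\phi$ (Theorem~\ref{existence theorem}) into the desired equivalence by matching algebra homomorphisms $\CAlg\phi\to\send_{\ms O_S}(W)$ with coherent sheaves on $X$ whose $\phi$-pushforward is pulled back from $S$. First, given a representation $(W,f)$ with $W\in\FLF{\ms O_S}$, Theorem~\ref{existence theorem} applied to $\ms B=\send_{\ms O_S}(W)$ produces an $\ms O_Y$-algebra map $\phi_\ast\ms O_X\to\pi_Y^\ast\send_{\ms O_S}(W)$; since $W$ is finite locally free, the target is canonically identified with $\send_{\ms O_Y}(\pi_Y^\ast W)$, so this datum is equivalent to a $\phi_\ast\ms O_X$-module structure on $\pi_Y^\ast W$. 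Because $\phi$ is affine, $\phi_\ast$ realizes an equivalence between quasi-coherent $\ms O_X$-modules and quasi-coherent $\phi_\ast\ms O_X$-modules on $Y$, and the $\phi_\ast\ms O_X$-module $\pi_Y^\ast W$ then corresponds to a quasi-coherent sheaf $V$ on $X$ together with an isomorphism $\phi_\ast V\simeq\pi_Y^\ast W$. Choosing any fppf cover $S'\to S$ that trivializes $W$ now exhibits $V$ as Ulrich for $\phi$, defining the functor $\theta$; compatibility with pullbacks $T\to S$ is immediate from Lemma~\ref{clifford base change}, so $\theta$ lifts to a morphism of stacks.

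For the inverse, given an Ulrich sheaf $V$, I would set $W:=(\pi_Y\circ\phi)_\ast V$ and carry along the tautological $\phi_\ast\ms O_X$-action on $\phi_\ast V$. By hypothesis there is an fppf cover $f:S'\to S$ with $f_Y^\ast\phi_\ast V\simeq\pi_{Y_{S'}}^\ast\ms O_{S'}^{\oplus r}$. The properties in \wweird\ --- cohomological flatness of $\pi_Y$ in dimension $0$ together with $(\pi_Y)_\ast\ms O_Y\simeq\ms O_S$ --- ensure that $\pi_{Y,\ast}$ commutes with the base change $f$ on the sheaves at hand, so $f^\ast W\simeq\ms O_{S'}^{\oplus r}$ and hence $W$ is finite locally free of rank $r$ on $S$ by faithfully flat descent. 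Similarly, the adjunction map $\pi_Y^\ast W\to\phi_\ast V$ becomes an isomorphism after $f$ and is therefore already an isomorphism. Transporting the $\phi_\ast\ms O_X$-action across $\phi_\ast V\simeq\pi_Y^\ast W$ yields an $\ms O_Y$-algebra map $\phi_\ast\ms O_X\to\send_{\ms O_Y}(\pi_Y^\ast W)\cong\pi_Y^\ast\send_{\ms O_S}(W)$, which Theorem~\ref{existence theorem} converts into a representation $\CAlg\phi\to\send_{\ms O_S}(W)$.

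A direct check will show these two constructions are mutually quasi-inverse and natural in $S$, producing the equivalence of stacks $\stRep\phi{}\simeq\stUlr\phi{}$. The rank statement follows because matching ranks in $\phi_\ast V\simeq\pi_Y^\ast W$ forces $\rk_S(W)=d\cdot e$, where $e$ is the fiberwise rank of $V$ over $X$ (since $\phi$ is finite locally free of degree $d$); this forces $d\mid\rk_S(W)$ and, writing $\rk_S(W)=dm$, identifies $\Rep\phi{dm}$ with $\Ulr\phi{m}$. The main technical obstacle is the inverse construction: passing from fppf-local triviality of $\phi_\ast V$ to a globally defined finite locally free $W$ on $S$ equipped with an isomorphism $\pi_Y^\ast W\simeq\phi_\ast V$, which is exactly what the cohomological flatness hypothesis in \wweird\ is designed to enable.
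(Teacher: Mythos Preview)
Your approach is correct and matches the paper in the forward direction. For the inverse, however, you take a genuinely different route: you construct $W$ explicitly as $(\pi_Y)_\ast\phi_\ast V$ and verify via fppf descent that it is finite locally free with the counit $\pi_Y^\ast W\to\phi_\ast V$ an isomorphism. The paper instead rewrites $\stRep\phi{}$ as a stack of triples $(W,V,f)$ with $f\colon\phi_\ast V\simto\pi_Y^\ast W$, takes $\theta$ to be the forgetful functor $(W,V,f)\mapsto V$, and proves full faithfulness by showing the fiber over each $V$ is a contractible groupoid. The key step there is that the comparison isomorphism $\gamma=f_2\circ f_1^{-1}\colon\pi_Y^\ast W_1\to\pi_Y^\ast W_2$, regarded as a $Y$-point of the affine $S$-scheme underlying $\sHom_{\ms O_S}(W_1,W_2)$, descends uniquely to an $S$-point because $\ms O_S\to(\pi_Y)_\ast\ms O_Y$ is an isomorphism (Lemma~\ref{proper to affine}). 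Your explicit inverse and the paper's descent-of-sections argument are two faces of the same coin, both ultimately resting on $(\pi_Y)_\ast\ms O_Y=\ms O_S$; your version is more constructive and makes the role of that identity transparent, while the paper's avoids having to verify directly that the counit map is an isomorphism.

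One small correction: when you say that cohomological flatness ``ensures that $\pi_{Y,\ast}$ commutes with the base change $f$'', that commutation is in fact automatic because your $f$ is fppf and hence flat. What cohomological flatness in dimension $0$ buys you is that the identity $(\pi_Y)_\ast\ms O_Y\simeq\ms O_S$ persists after \emph{arbitrary} base change, which you then need in order to conclude $(\pi_{Y'})_\ast\ms O_{Y'}^{\oplus r}\simeq\ms O_{S'}^{\oplus r}$.
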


\begin{lem} \label{proper to affine}
Suppose that $\pi: Y \to S$ is a morphism of schemes such that the natural map $$\ms O_S\to\pi_\ast\ms O_Y$$ is an isomorphism. Given an affine morphism $H \to
S$, the natural map
\[ Hom_{S}(S, H) = Hom_{S}(Y, H) \]
is bijective.
\end{lem}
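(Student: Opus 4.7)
The strategy is to use the equivalence between affine morphisms over $S$ and quasi-coherent sheaves of $\ms O_S$-algebras, and then transport the problem into a statement about algebra homomorphisms that follows immediately from the hypothesis.

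Concretely, first I would write the affine morphism $H\to S$ in the form $H=\underline{\Spec}_S\ms B$ for some quasi-coherent sheaf of $\ms O_S$-algebras $\ms B$. The universal property of the relative Spec construction gives, for any $S$-scheme $f\colon T\to S$, a natural bijection
\[ \Hom_S(T,H)\;\cong\;\Hom_{\Alg{\ms O_S}}(\ms B,\,f_\ast\ms O_T). \]
Applying this with $T=S$ (so that $f_\ast\ms O_T=\ms O_S$) gives $\Hom_S(S,H)\cong\Hom_{\Alg{\ms O_S}}(\ms B,\ms O_S)$, and applying it with $T=Y$ and $f=\pi$ gives $\Hom_S(Y,H)\cong\Hom_{\Alg{\ms O_S}}(\ms B,\pi_\ast\ms O_Y)$.

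Next I would observe that the map $\Hom_S(S,H)\to\Hom_S(Y,H)$ appearing in the statement — namely, precomposition with $\pi$ — corresponds under these two bijections to the map
\[ \Hom_{\Alg{\ms O_S}}(\ms B,\ms O_S)\;\lra\;\Hom_{\Alg{\ms O_S}}(\ms B,\pi_\ast\ms O_Y) \]
given by post-composition with the natural $\ms O_S$-algebra homomorphism $\ms O_S\to\pi_\ast\ms O_Y$. This is the key identification; I would verify it by tracing the identity map on $\ms B$ through both universal properties, which is a direct unwinding of definitions.

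By hypothesis, the map $\ms O_S\to\pi_\ast\ms O_Y$ is an isomorphism of sheaves of $\ms O_S$-algebras, so post-composition with it induces a bijection on $\Hom_{\Alg{\ms O_S}}(\ms B,-)$. Chaining the three bijections gives the desired result. There is no real obstacle here; the entire content is the universal property of relative Spec together with the hypothesis, and the only place one needs to be careful is in checking that the square identifying the two Hom-set maps actually commutes.
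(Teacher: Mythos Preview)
Your argument is correct. The paper's own proof is simply a citation to EGA~I, Ch.~I, Prop.~2.2.4; your argument via the universal property of relative $\Spec$ is precisely the content of that proposition (in its relative form), so you have unpacked the citation rather than taken a different route.
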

\begin{proof}
This is an immediate consequence of \cite[Ch.~I, Prop.~2.2.4]{EGA1}.
\end{proof}

\begin{proof}[Proof of Proposition~\ref{Rep equal Ulr}]
By the universal property of the Clifford algebra, it is easy to see that
$\stRep\phi{}$ is equivalent to the stack whose objects over $T$ consist of
pairs $(W, \psi)$ where $W$ is a finite locally free sheaf of $\ms
O_T$-modules, and where $\psi : \phi_\ast \ms O_{X_T} \to \send_{\ms O_{Y_T}}(\pi^\ast
W)$ is a $\phi_\ast \ms O_{X_T}$-module structure on $\pi^\ast W$, and where
morphisms must preserve the $\pi_\ast \ms O_{X_T}$-module structure. This in turn,
is equivalent to the stack whose objects over $T$ consist of triples $(W, V, f)$, where
\begin{itemize}
\item $W$ is a finite locally free sheaf of $\ms O_T$-modules,
\item $V$ is a coherent sheaf of $\ms O_{X_T}$-modules,
\item $f : \phi_\ast V \to \pi^\ast W$ is an isomorphism of $\ms
O_{Y_T}$-modules, and
\item morphisms $(W, V, f) \to (W', V', f')$ are given by maps
$\alpha: W
\to W'$ and $\beta: V \to V'$ such that the diagram
\[\xymatrix{
\phi_\ast V \ar[d]_{f} \ar[r]^{\phi_\ast \beta} & \phi_\ast V' \ar[d]^{f'} \\
\pi^\ast W \ar[r]_{\pi^\ast\alpha} & \pi^\ast W'
}\]
commutes.

\end{itemize}

We define $\theta: \stRep\phi{} \to \stUlr\phi{}$ by sending $(W, V, f)$ to $V$.
Essential surjectivity of $\theta$ follows from the definition of the Ulrich
stack.

Now consider $\stRep\phi{}$ as a fibered category over
$\stUlr\phi{}$. To see that we have an equivalence of stacks, it
suffices to show that, for an object $V \in \stUlr\phi{}$, that
the fiber category over $V$ is a groupoid such that for every pair of
objects $a, b \in \stRep\phi{}$, the morphism set
$Hom_{\stRep\phi{}(V)}(a, b)$ consists of a single element. To
verify this, we choose $a = (W_1, V, \alpha_1)$, $b = (W_2, V, \alpha_2)$.
Define $\gamma : \pi^\ast W_1 \to \pi^\ast W_2$ as the unique isomorphism of $\ms
O_Y$-modules making the diagram
\[\xymatrix{
& V \ar[ld]_{\alpha_1} \ar[rd]^{\alpha^2} \\
\pi^\ast W_1 \ar[rr]_{\gamma} & & \pi^\ast W_2
}\]
commute. We claim that the map $\gamma$ comes from a unique isomorphism
$\til \gamma : W_1 \to W_2$. To do this, we consider the sheaf $\ms H = \sHom_{\ms O_S}(W_1, W_2)$.
Local freeness of the $W_i$s implies that we have $\pi^\ast \ms H = \ms
Hom_{\ms O_Y}(\pi^\ast W_1, \pi^\ast W_2)$. Setting $H \to S$ to be the
underlying affine scheme of the vector bundle $\ms H$, we then have $H
\times_S Y$ is the underlying affine scheme of the vector bundle $\pi^\ast \ms
H$. We would like to show that a given section $\gamma : Y \to H \times_S
Y$ comes via pullback from a unique section $\til \gamma: S \to H$. But
this is exactly Lemma~\ref{proper to affine}
\end{proof}

We can similarly define projectively Ulrich bundles and Azumaya
representations as follows.
\begin{defn}
Let $S$ be a scheme, and $\ms A$ a sheaf of $\ms O_S$-algebras. The category of \emph{Azumaya representations\/}, denoted
$\AzRep{\ms A}{}$ (respectively, \emph{Azumaya representations of degree $n$\/}, denoted $\AzRep{\ms A}{n}$) is the category
whose objects are pairs $(f, B)$ where $B$ is a sheaf of Azumaya algebras
over $S$ (respectively, Azumaya algebras over $S$ of degree $n$) ,
and where $f : \ms A \to B$ is an isomorphism of $\ms O_S$-algebras. A
morphism $(f, B) \to (g, C)$ will be a morphism of $\ms O_S$-algebras $B
\to C$ such that the diagram
\[\xymatrix@R=.2cm @C=2cm{
& B \ar[dd] \\
\ms A \ar[ru]^f \ar[rd]_g \\
& C
}\]
commutes.
\end{defn}

As before, we obtain a stack $\stAzRep{\ms A}{}$ (respectively
$\stAzRep{\ms A}{n}$) defined over $S$ which associates to $U \to S$ the
category $\AzRep{\ms A_U}{}$ (respectively $\stAzRep{\ms A}{n}$). This
stack carries a universal sheaf of Azumaya algebras $\mf A_{\ms A}$ and
a universal representation $\ms A_{\stAzRep{\ms A}{}} \to \mf A_{\ms A}$. We note
that there is a natural morphism of stacks $\stRep{\ms A}{} \to
\stAzRep{\ms A}{}$ which gives $\stRep{\ms A}{}$ the structure of a $\mathbb
G_m$-gerbe over $\stAzRep{\ms A}{}$, whose class over an object $(f, B)$ is
precisely the Brauer class of $B$. In particular, the global class of the
gerbe is given by the algebra $\mf A_{\ms A}$.

\begin{defn}
Let $\phi: X \to Y$ be morphisms of $S$-schemes.  The \emph{category of
projectively Ulrich bundles\/} $\PrUlr\phi{}$ is the category of sheaves of modules $V$ on
$X$ such that the projective bundle $\mbb P(\phi_\ast V)\to Y$ is isomorphic over $Y$ to $P\times_S Y\to Y$, where $P\to S$ is a Brauer-Severi scheme.
\end{defn}
This is to say, we require that $P$ is isomorphic fppf-locally on $S$ to $\mbb P^n_S$.

Comparing automorphism groups, for a morphism $\phi: X \to Y$ satisfying
condition \wweird, we have a diagram of
stacks that commutes up to $2$-isomorphism
\[\xymatrix{
\stRep{\phi}{md} \ar[r] \ar[d] & \stAzRep\phi{md} \ar[d] \\
\stUlr\phi{m} \ar[r] & \stPrUlr\phi{m}
}\]
in which the vertical arrows are equivalences.

\begin{rem}
Note that the horizontal arrows in the above diagram are $\mbb G_m$-gerbes, so the stacks in each row of the diagram have isomorphic sheafifications.
\end{rem}

\begin{defn}
We say that a representation $(f, W) \in \Rep\phi{}$ is a
\emph{specialization} if $f : \CAlg\phi \to \send(W)$ is surjective (and
similarly for objects in $\AzRep\phi{}$).
\end{defn}

Recall that a vector bundle $V/X$ is called \textbf{simple} if its
automorphism sheaf over $S$ is $\mbb G_m$.
\begin{defn}
Let $\SUlr\phi{}, \PrSUlr\phi{}$ denote the categories of simple Ulrich
bundles and projectively bundles respectively. Let $\stSUlr\phi{},
\stPrSUlr\phi{}$ denote the associated substacks of $\stUlr\phi{},
\stPrUlr\phi{}$.
\end{defn}

\begin{rem} \label{special simple}
If a representation $(f, W)$ is a specialization, then it follows that its
associated Ulrich bundle is simple (from the fact that any automorphism of
a vector space which commutes with every linear transformation must be
central and hence scalar multiplication).
\end{rem}

\begin{rem} \label{simple gerby}
Since the objects of $\SUlr\phi{}$ have automorphism group $\mbb G_m$, it
follows that we can identify $\PrSUlr\phi{}$ with the sheafification of
$\SUlr\phi{}$.
\end{rem}

To examine the representations of the Clifford algebra, it is natural to consider imposing the identites of $d \times d$ matrices. Let us first recall how this is done, following \cite[Prop~1.3.10]{RowenPI}. Consider the polynomial ring $\mathbb Z[\xi]$ in countably many (commutative) indeterminates
$\xi_{i,j}^\ell$ and let $m^{(\ell)} \in M_n(\mathbb Z[\xi])$ be the matrix with entries $(\xi_{i,j}^\ell)$. Consider
$\mathbb Z\langle \vec a \rangle$
the free associative algebra in countably many (noncommutative) indeterminates $a_k$. Let $\til I_d$ denote the kernel of the map
$\mathbb Z\langle \vec a \rangle \to \mathbb Z_n[\xi]$ defined by sending $a_\ell$ to the ``generic matrix'' $m^{(\ell)}$.

\begin{defn}
For $p \in \til I_d$, and for $\vec b$ a tuple of elements in $B$, we consider the expression $p(\vec b)$ in
$B$ obtained by specializing $a_\ell$ to $b_\ell$.
We define $I_d(B)$, the \textit{ideal of identities of $d \times d$ matrices in $B$}, be the ideal of $B$ generated by all elements of the form $p(\vec b)$ for $p \in \til I_d$ and all tuples $\vec b \in B^{\mathbb N}$.
\end{defn}

\begin{defn} \label{reduced clifford}
Let $\phi: X \to Y$ be a morphism of $S$-schemes satisfying condition
\wweird, with $\phi$ constant rank $d$. We let the \textbf{reduced Clifford
algebra}, denoted $\CAlg\phi^{\text{red}}$, be
the quotient of $\CAlg\phi$ by the sheaf of ideals $I_d(\CAlg\phi)$ generated locally by all
the identities of $d \times d$ matrices.
\end{defn}

\begin{thm} \label{good azumaya love}
Let $\phi: X \to Y$ be a morphism of $S$-schemes satisfying condition
\wweird, with $\phi$ of constant degree $d$.  Let $\ms C =
\CAlg\phi^{\text{red}}$, and $\ms Z = Z(\ms C)$ its center. Then
\begin{enumerate}
\item \label{identities don't hurt the small guy}
there is an equivalence of categories $\Rep\phi{d} \cong \Rep{\ms C}d$ that is functorial with respect to base change on $S$,
inducing an isomorphism of stacks $\stRep\phi{d} \cong \stRep{\ms C}d$.
\end{enumerate}
When $S = \Spec k$ is the spectrum of a field, we have in addition that
\begin{enumerate}
\setcounter{enumi}{1}
\item \label{field minimal azumaya} $\ms C$ is Azumaya over $\ms Z$ of rank
$d$;
\item \label{field azumaya center}
there is a natural isomorphism $\Spec(\ms Z) \cong \stAzRep\phi{d}$, and
every Azumaya representation of degree $d$ is a specialization;
\item \label{field small specializations}
if $v \in \stAzRep\phi{d}$ then the class of the gerbe $\stRep\phi{d} \to
\stAzRep\phi{d}$ lying over $v$ is exactly $\ms C \otimes_{\ms Z} k(v)$,
where $k(v)$ is the residue field of $v$.
\end{enumerate}
\end{thm}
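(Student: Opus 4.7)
The plan is to handle the four parts in sequence, with part (\ref{field minimal azumaya}) being the main technical obstacle.

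For part (\ref{identities don't hurt the small guy}), the key observation is that the defining quotient $\CAlg\phi \to \ms C$ is precisely the universal receiver of degree-$d$ representations. Any $(f,W) \in \Rep\phi d$ sends $\CAlg\phi$ into $\send_{\ms O_S}(W)$, which is fppf-locally isomorphic to the matrix algebra $M_d$; hence $\send_{\ms O_S}(W)$ satisfies every $d \times d$ matrix identity, and $f$ must kill the sheaf of ideals generated by these identities, factoring uniquely through $\ms C$. Conversely every representation of $\ms C$ gives one of $\CAlg\phi$ by precomposing with the quotient map. Both constructions commute with base change on $S$ (via the functoriality of the $d$-identity ideal, together with Lemma~\ref{clifford base change}), yielding the claimed isomorphism of stacks.

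For part (\ref{field minimal azumaya}), restrict to $S = \Spec k$. Then $\ms C$ is a finitely generated $k$-algebra (since $\CAlg\phi$ is generated by the finite-rank module $(\phi_\ast \ms O_X)_\pi$) satisfying all identities of $M_d$. The approach is via the Artin--Procesi theorem: a finitely generated PI algebra satisfying exactly the identities of $M_d$ and admitting a faithful family of $d$-dimensional representations is Azumaya of degree $d$ over its center. To supply the required family of representations, combine part (\ref{identities don't hurt the small guy}) with Proposition~\ref{Rep equal Ulr} to identify degree-$d$ representations of $\ms C$ with rank-one Ulrich sheaves on $X$; the universal property of the Clifford algebra then shows that each geometric point of the moduli of such Ulrich bundles determines an algebra map $\ms C \to M_d(K)$ for some field extension $K/k$, collectively faithful because $\CAlg\phi$ was built to corepresent this family. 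The main obstacle is to match $\ms Z$ with the universal parameter ring of Procesi's construction, i.e.\ to see that $\Spec \ms Z$ is the coarse moduli of degree-$d$ specializations; this will be done by pairing the universal property of Theorem~\ref{existence theorem} against the universal property of centers of Azumaya algebras.

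For parts (\ref{field azumaya center}) and (\ref{field small specializations}), the isomorphism $\Spec \ms Z \cong \stAzRep\phi d$ now follows from part (\ref{field minimal azumaya}). Given a degree-$d$ Azumaya representation $(g, B)$ on $T$, $g$ factors through $\ms C \otimes_k \ms O_T$ by part (\ref{identities don't hurt the small guy}) (applied fppf-locally where $B$ splits), and restricts to a map $\ms Z \otimes_k \ms O_T \to Z(B) = \ms O_T$, producing $T \to \Spec \ms Z$; conversely a morphism $T \to \Spec \ms Z$ yields the Azumaya algebra $\ms C \otimes_{\ms Z} \ms O_T$ by base change of Azumaya algebras. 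Every such Azumaya representation is automatically a specialization because the induced map $\ms C \otimes_{\ms Z} \ms O_T \to B$ is a map of Azumaya algebras of equal degree over $\ms O_T$, hence an isomorphism. Part (\ref{field small specializations}) is then immediate: under the identification of (\ref{field azumaya center}), the universal Azumaya algebra $\mf A_{\ms C}$ over $\stAzRep\phi d$ is precisely $\ms C$ viewed as a $\ms Z$-algebra, so its fiber over $v$ is $\ms C \otimes_{\ms Z} k(v)$, and the Brauer class of this fiber is exactly the class of the $\mathbb G_m$-gerbe $\stRep\phi d \to \stAzRep\phi d$ over $v$ by the remarks preceding Definition~\ref{reduced clifford}.
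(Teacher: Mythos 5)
Your overall architecture matches the paper's: part (\ref{identities don't hurt the small guy}) by noting that $\send(W)$ satisfies the $d\times d$ matrix identities, part (\ref{field minimal azumaya}) by Artin's characterization of Azumaya algebras via polynomial identities, and parts (\ref{field azumaya center})--(\ref{field small specializations}) by constructing mutually inverse maps between $\Spec \ms Z$ and $\stAzRep\phi{d}$, using that a homomorphism of Azumaya algebras of equal degree over the same base is an isomorphism. Parts (\ref{identities don't hurt the small guy}), (\ref{field azumaya center}) and (\ref{field small specializations}) are essentially the paper's arguments and are fine.

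The gap is in part (\ref{field minimal azumaya}). The Artin--Procesi criterion is not ``satisfies exactly the identities of $M_d$ and admits a faithful family of $d$-dimensional representations''; that condition is insufficient. For instance, $M_d(k)\times k[x]$ satisfies exactly the identities of $M_d$ and carries a collectively faithful family of $d$-dimensional representations (project onto either factor, embedding $k[x]$ into $M_d$ as scalars), yet it is not Azumaya of degree $d$ over its center. The correct hypothesis --- and the one the paper verifies --- is that no nonzero homomorphic image of $\ms C$ satisfies the identities of $(d-1)\times(d-1)$ matrices, equivalently that no nonzero quotient of $\ms C$ lies inside a matrix algebra of degree smaller than $d$. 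This is precisely where Proposition~\ref{Rep equal Ulr} is needed: every representation of $\CAlg\phi$ has rank a multiple of $d$, so there are no representations of degree strictly between $0$ and $d$. You cite Proposition~\ref{Rep equal Ulr}, but only to \emph{produce} $d$-dimensional representations, not to \emph{exclude} smaller ones, which is the hypothesis that actually matters. Moreover, your claim that the family of $d$-dimensional representations is ``collectively faithful because $\CAlg\phi$ was built to corepresent this family'' does not follow: corepresentability says nothing about the intersection of the kernels of all such representations being zero, and faithfulness of that family is essentially equivalent to the Azumaya property you are trying to establish. Once part (\ref{field minimal azumaya}) is repaired along these lines, your treatment of the remaining parts goes through as written.
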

\begin{proof}
Part~\ref{identities don't hurt the small guy} follow immediately from the
fact that any $d \times d$ identities are automatically in the kernel of
these representations.

For part~\ref{field minimal azumaya} the result is a consequence of Artin's
characterization of Azumaya algebras via identities in
\cite[Theorem~8.3]{Artin:PIAA} or \cite[Theorem~1.8.48(i/v)]{RowenPI}, since by Proposition~\ref{Rep equal
Ulr}, no homomorphic image of $\ms C$ can lie inside a matrix algebra of
degree smaller than $d$.

For parts~\ref{field azumaya center} and~\ref{field small specializations},
we construct mutually inverse morphisms $\stAzRep\phi{d} \leftrightarrow \Spec \ms Z$,
and show that the universal Azumaya algebras on the left coincides with the
Azumaya algebra $\ms C$ on the right.

Note that as in \ref{simple gerby}, since the Ulrich bundles under
consideration are line bundles (via the numerology of Proposition~\ref{Rep
equal Ulr}), the representations of degree $d$ have automorphism group
$\mbb G_m$.

Let $\Spec R$ be a $k$-algebra, and consider an object of
$\stAzRep\phi{d}(R)$ described as a representation $\ms C \to B$ where $B$
is a degree $d$ Azumaya algebra over an $k$-algebra $R$. This gives a
homomorphism of commutative $k$-algebras $\ms Z \to R$ and hence an object
of $\Spec \ms Z(R)$. Since by part~\ref{field minimal azumaya}, $\ms C$ is
Azumaya of degree $d$, it follows that $\ms C \otimes_{\ms Z} R \to B$ is a
homomorphism of Azumaya algebras over $R$ of the same rank, and hence must
be an isomorphism. Therefore the Azumaya algebra $\ms C$ on $\Spec \ms
Z$ pulls back to the canonical Azumaya algebra on $\stAzRep\phi{d}$.

In the other direction, since a homomorphism $\ms Z
\to R$ yields a representation $\ms C \to \ms C \otimes_{\ms Z} R$
which is a rank $d$ Azumaya algebra over $R$, we obtain an inverse morphism
$\Spec \ms Z \to \stAzRep\phi{d}$ as desired.
\end{proof}

\begin{cor} \label{standard azumaya}
Let $k$ be a field and $\phi: X \to Y$ a finite faithfully flat degree $d$
morphism of proper integral $k$-schemes of finite type. Then
$\CAlg\phi^{\text{red}}$ is an Azumaya algebra of degree $d$ over its
center. In particular any $k$-linear map $\CAlg\phi \to D$ for a central simple
$k$-algebra $D$ of degree $d$ must coincide with a (surjective)
specialization of the Azumaya algebra $\CAlg\phi^{\text{red}}$ with respect to
a $k$ point of $\Spec \ms Z$.
\end{cor}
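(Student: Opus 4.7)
The plan is to deduce both assertions directly from Theorem~\ref{good azumaya love} after verifying that $\phi$ satisfies property \wweird\ in this setting. Since $Y$ is proper and of finite type over the field $k$, the structure morphism $\pi_Y$ is automatically proper, flat, and of finite presentation; cohomological flatness in dimension $0$ holds because every base change from $\Spec k$ is flat, so cohomology commutes with it. The map $\phi$ is finite and faithfully flat of constant degree $d$, hence finite locally free and surjective. Finally, $\phi_\ast \ms O_X$ is a rank $d$ locally free sheaf on $Y$, its dual is likewise locally free of rank $d$, and the pushforward $\pi_\ast\left((\phi_\ast \ms O_X)^\vee\right)$ is a finite-dimensional $k$-vector space whose formation is compatible with arbitrary base change, since all such base changes are flat over a field.

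Granting this, part~(\ref{field minimal azumaya}) of Theorem~\ref{good azumaya love} immediately yields the first assertion: $\CAlg\phi^{\text{red}}$ is Azumaya of degree $d$ over its center $\ms Z$.

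For the second assertion, let $\psi: \CAlg\phi \to D$ be a $k$-algebra homomorphism into a central simple $k$-algebra $D$ of degree $d$. I would first show that $\psi$ factors through $\ms C = \CAlg\phi^{\text{red}}$. After extending scalars to a splitting field $L/k$, $D \otimes_k L \cong M_d(L)$ satisfies every polynomial identity of $d \times d$ matrices, and by faithful flatness of $L/k$ so does $D$. Therefore the ideal generated by $d \times d$ matrix identities in $\CAlg\phi$ lies in $\ker \psi$, and $\psi$ factors through $\ms C$. The resulting map $\ms C \to D$ is an Azumaya representation of degree $d$, so by part~(\ref{field azumaya center}) of Theorem~\ref{good azumaya love} it corresponds to a $k$-point of $\Spec \ms Z$ and is automatically a specialization, in particular surjective.

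The main obstacle is not a substantive difficulty but careful bookkeeping: the real content lies in Theorem~\ref{good azumaya love}, and the corollary is mostly a matter of unpacking definitions and checking hypotheses. The only observation with genuine content is the PI-theoretic remark that a degree $d$ central simple algebra kills the ideal of $d \times d$ matrix identities, which makes the factorization $\CAlg\phi \to \CAlg\phi^{\text{red}} \to D$ automatic.
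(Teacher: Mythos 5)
Your proof is correct and follows the same route as the paper, which simply observes that over a field the hypotheses imply condition \wweird\ and then cites Theorem~\ref{good azumaya love}. Your additional details---the explicit verification of \wweird\ and the PI-theoretic observation that a degree $d$ central simple algebra satisfies all $d\times d$ matrix identities, so that the map factors through $\CAlg\phi^{\text{red}}$---are correct elaborations of steps the paper leaves implicit.
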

\begin{proof}
Note that since $k$ is a field, the condition above ensures that $\phi$
will satisfy condition \wweird. This is then an immediate consequence of
Theorem~\ref{good azumaya love}.
\end{proof}

\section{Clifford algebras for curves} \label{curves}

In this section, we specialize to the case that $S = \Spec k$ is the
spectrum of the field, $X$ is a smooth projective $k$-curve and $Y = \mbb P^1$.
In this case, we will find that the Clifford algebra is in some sense not
sensitive to the choice of the particular morphism, but only on its degree
(see Corollary~\ref{map independence}), and that the period-index obstruction for the curve
gives some structural information about the Clifford algebra (see
Corollary~\ref{clifford decomposability}).

Let us begin with some preliminary concepts and language.
Let $X/k$ be a smooth, projective, geometrically connected curve. We recall
that the index of $X$, denoted $\ind X$ is the minimal positive degree of a
$k$-divisor on $X$. Let $\Pic X$ denote the Picard group of $X$, $\stPic X
{}$ the Picard stack of line bundles on $X$, and $\shPic X{}$ its coarse
moduli space. Write $\stPic X{n}$, $\shPic X{n}$ for the components of line
bundles of degree $n$. The Jacobian variety $J(X) = \shPic X{0}$ has the
structure of an Abelian variety under which the spaces $\shPic X{n}$ are
principal homogeneous spaces. The period of $X$, $\per X$, is the order of
$\shPic X{1}$, considered as a principal homogeneous space over the
Jacobian of $X$. The index can be considered as the minimal $n$ such that
$\shPic X{n}$ has a rational point.

We recall that we have a natural map from the $k$-rational points on
the Picard scheme of $X$ to the Brauer group of $k$, giving us an
exact sequence
\[ \Pic X \to \shPic X{}(k) \to \Br(k) \to \Br(k(X))\]
and identifying the image of the Picard scheme with the relative Brauer
group $\Br(k(X)/k)$ defined simply as the kernel of the map above on the
right (see, for example \cite[Section~3]{Clark:PI} or
\cite[Theorem~2.1]{CiKra}). In \cite[Theorem~3.5]{CiKra} it is shown that this map can be describe as being obtained from
specializing a Brauer class $\alpha_X \in \Br(\shPic X{})$, described in \cite[Lemma~3.2, Remark~3.4]{CiKra}. We define the subgroup $\Br_0(k(X)/k) \subset
\Br(k(X)/k)$ to be those elements which are images of degree $0$ classes,
i.e. $k$-points of the Jacobian of $X$. From \cite[Theorem~2.1]{CiKra}, we
have an isomorphism
\[\frac{\Br(k(X)/k)}{\Br_0(k(X)/k)} \cong \frac{\mbb Z}{(\ind X/\per
X)\mbb Z}\]

\begin{defn} \label{obstruction def}
Suppose that $\alpha \in \Br(k)$ which represents a nontrivial element of
the cyclic group $\Br(k(X)/k)/\Br_0(k(X)/k)$. Then, following
\cite{ONeil:PI} we call $\alpha$ an \textbf{(period-index) obstruction
class} for $X$.
\end{defn}

\begin{rem} \label{obstruction degree}
Let $m$ be the period of $X$. If $p \in \shPic Xm(k)$ and $q \in
\shPic X{rm}(k)$, then it follows that
$$\alpha_X|_{q} = \alpha_X|_{rp} + \alpha_X|_{q - rp} = r\alpha_X|_p +
\alpha_X|_{q - rp},$$
and so the class of $\alpha_X|_q$ is equal to the class of $\alpha_X|_{rp}$
in $\Br(X/k)/\Br_0(X/k)$, and consequently the image of any point in
$\shPic Xm(k)$ is a period-index obstruction class.
\end{rem}

\subsubsection{Relation between the universal Clifford representation space
and the universal gerbe}  \label{clifford gerbe}
Note that since the leftmost map $\Pic X \to \shPic X{}(k)$ can be
identified with the sheafification/coarse moduli map of stacks $\stPic X{}
\to \shPic X{}$ on objects defined over $k$
(a $\mathbb G_m$-gerbe, as in Remark~\ref{simple gerby}), the Brauer class
$\alpha_x$ corresponding to a $k$-point $x \in \Pic X{}(k)$ is split if and
only if the $\mbb G_m$-gerbe on $x$ obtained by pullback is split. By
a result of Amitsur (\cite[Theorem~9.3]{Am}), it follows that the Brauer class
corresponding to the gerbe $\stPic X{} \to \shPic X{}$ and the Brauer class
$\alpha_X$ defining the obstruction map generate the same cyclic subgroup
in the Brauer group of each component of $\shPic X{}$. If $X \to Y$ is a
finite morphism of degree $d$, then via the identification of the gerbe
$\stPic X{} \to \shPic X{}$ with the restriction of the gerbe
$\stRep\phi{d} \to \stAzRep\phi{d}$ of rank $d$ representations of the
Clifford algebra, which are necessarily specializations (see
Corollary~\ref{standard azumaya}) and the algebra $\CAlg\phi^{\text{red}}$,
we find that the degree $d$ specializations of the Clifford algebra
consists exactly of those central simple algebras of degree $d$ which are
obstruction classes for some Ulrich line bundle on $X$ with respect to the
$k$-morphism $\phi: X \to Y$.

\subsubsection{Stability and semistability}
For a $X$ a smooth projective curve over a field $k$ and a coherent sheaf
$V/X$, we write $\deg V = \deg c_1(V) = \int c_1(V)$ and $\mu V = \deg
V/\rk V$. Recall that a coherent sheaf is called \textbf{semistable} if for
every subsheaf $W \leq V$, we have $\mu W \leq \mu V$ and \textbf{stable} if
for every proper subsheaf $W < V$, we have $\mu W < \mu V$.

We would like to characterize in a natural way, which coherent sheaves on
$X$ will be Ulrich with respect to a finite morphism $\phi: X \to \mbb
P^1$. To do this, we have the following fact, closely following
\cite[Sections~2.1,~2.2]{VdB:Lin} and \cite{Cos}.
\begin{prop} \label{who is Ulrich}
Let $X$ be a smooth projective geometrically connected curve of genus $g$
over a field $k$. If $\phi : X \to \mbb P^1$ is a finite morphism of degree
$d$, then a coherent sheaf $V/X$ is Ulrich with respect to $\phi$ if and
only if
\begin{enumerate}
\item \label{Ulrich slope} $V$ is a semistable vector bundle on $X$ of
slope $\mu V = d + g - 1$,
\item $H^0(X, V(-1)) = 0$.
\end{enumerate}
\end{prop}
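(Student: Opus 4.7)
The plan is to reduce the Ulrich condition to the assertion that $\phi_\ast V$ is a trivial vector bundle on $\bP^1_k$, then use Grothendieck's splitting theorem to translate triviality into a degree and a vanishing condition on $V$.

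First, since Grothendieck's splitting theorem holds for $\bP^1$ over any field, a vector bundle on $\bP^1_k$ becomes trivial after fppf base change if and only if it is already trivial over $k$; thus the Ulrich condition is equivalent to $\phi_\ast V \cong \ms O_{\bP^1}^{\oplus m}$ for some $m \geq 0$. Writing $\phi_\ast V = \bigoplus_i \ms O(a_i)$, this triviality is equivalent to the conjunction of $\deg \phi_\ast V = \sum_i a_i = 0$ and $a_i \leq 0$ for every $i$, where the latter can be rephrased as $H^0(\bP^1, \phi_\ast V(-1)) = 0$.

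Next, I would translate both conditions back to $V$. Since $\phi$ is finite, $R^i \phi_\ast V = 0$ for $i>0$ and $\chi(X, V) = \chi(\bP^1, \phi_\ast V)$; combined with Riemann--Roch on both sides and $\rk \phi_\ast V = d \cdot \rk V$, this gives $\deg \phi_\ast V = \deg V - r(d+g-1)$ for $r = \rk V$, so the degree condition becomes $\mu V = d + g - 1$. The projection formula together with exactness of $\phi_\ast$ (as $\phi$ is affine) yields $H^0(\bP^1, \phi_\ast V(-1)) = H^0(X, V(-1))$, so the vanishing condition becomes the stated one.

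The remaining piece, semistability, is needed only for the forward implication. Given a subsheaf $W \subset V$, I would pass to its saturation (a subbundle whose slope is no less than $\mu W$); then $\phi_\ast W$ injects into $\phi_\ast V \cong \ms O^{\oplus m}$, which is semistable of slope zero on $\bP^1$, forcing $\mu \phi_\ast W \leq 0$, and the Riemann--Roch formula above applied to $W$ yields $\mu W \leq d+g-1$. In the reverse direction semistability is actually unused: the slope and vanishing conditions alone force each $a_i$ to equal zero. The main obstacle is essentially organizational bookkeeping, most notably the verification that an Ulrich sheaf is automatically locally free (because $\phi$ is finite faithfully flat with locally free pushforward, so $V$ is flat on the regular curve $X$) and the routine Riemann--Roch computation for $\deg \phi_\ast V$.
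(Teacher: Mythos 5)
Your proposal is correct and follows essentially the same route as the paper: reduce Ulrich to triviality of $\phi_\ast V$, apply Grothendieck's splitting theorem, get the slope condition from Riemann--Roch via $\chi(X,V)=\chi(\mbb P^1,\phi_\ast V)$, and get the vanishing condition from the projection formula; your observation that semistability is only needed in the forward direction matches the paper, which likewise deduces triviality in the converse from the slope and vanishing conditions alone (via an $h^0$ count rather than your $\sum a_i=0$ argument) and cites Van den Bergh for the semistability you spell out directly. The only imprecision is your justification of local freeness of an Ulrich sheaf ("$V$ is flat on the regular curve $X$"); the correct one-line argument is that the torsion subsheaf of $V$ would push forward to a nonzero torsion subsheaf of the locally free sheaf $\phi_\ast V$.
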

\begin{rem} \label{theta remark}
We note that the condition $H^0(X, V(-1)) = 0$ can be interpreted as
saying that the vector bundle $V(-1)$, which has slope $g - 1$, lies in the
complement of a ``generalized $\Theta$-divisor,'' (see for example
\cite{Cos, Kulk:CBi}). Recall for example, that in the
classical case, the $\Theta$-divisor is the subvariety of $\shPic X{g - 1}$
whose $\ov k$-points correspond to classes of effective divisors, and hence
those for which $H^0$ is nontrivial.
\end{rem}
\begin{proof}[Proof of Proposition~\ref{who is Ulrich}]
First, suppose that $V/X$ is Ulrich with respect to $\phi$.
Since $X$ and $\mbb P^1$ are regular of dimension $1$ and $\phi$ is flat, the sheaf $V$ is locally free if and only if
$\phi_\ast V$ is locally free. (Indeed, the torsion subsheaf would have torsion pushforward.)
The fact that $V$ must be semistable is a consequence of
\cite[Lemma~1]{VdB:Lin}. Now, supposing that $V$ is rank $r$ so that
$\phi_\ast V \cong \ms O_{\mbb P^1}^{rd}$, we use Hirzebruch-Riemann-Roch to
see
\[\chi(V) = h^0 V - h^1 V = \deg V - r(g - 1)\]
and using the fact that $\chi(V) = \chi(\phi_\ast V)$, and $h^0 \phi_\ast V = h^0
\ms O_{\mbb P^1}^{rd}  = rd$, and $h^1 \phi_\ast V = 0$, we find $\chi(V)  =
rd$. Consequently, we have
\[\deg V = rd + r(g - 1) = r(d + g - 1)\]
and so $\mu(V) = d + g - 1$, as claimed.
For the other condition, we note that
\[H^0(X, V(-1)) = H^0(\mbb P^1, \phi_\ast V(-1)) = H^0(\mbb P^1, \ms
O(-1)^{rd}) = 0. \]

For the converse, let us assume that $V/X$ has slope $d + g - 1$ and
$H^0(X, V(-1)) = 0$. By the result of
Birkhoff-Grothendieck-Hazewinkel-Martin \cite[Theorem~4.1]{HazMar}, we can
write
\[\phi_\ast V \cong \oplus \ms O(n_i)\]
for some collection of integers $n_i$. We claim that all the $n_i$ are
equal to $0$, which would imply the result.

The condition that $H^0(X, V(-1)) = 0$ tells us that all the $n_i$ are
nonpositive. It therefore follows that we have $h^0(V)$ is precisely the
number of indices $i$ such that $n_i$ is equal to $0$. It follows from
Hirzebruch-Riemann-Roch that
\[\chi(V) = \deg - r(g-1) = rd + r(g - 1) - r(g
- 1) = rd,\]
and therefore $h^0 V = rd + h^1 V$, which implies that $h^0 V \geq rd$. But
this implies that at least $rd$ of the integers $n_i$ are nonnegative, and
therefore all are $0$, as claimed.
\end{proof}

\begin{cor} \label{map independence}
Suppose that $\phi, \phi' : X \to \mbb P^1$ are two degree $d$ morphisms of
curves. Then there exists a line bundle $\ms N$ on $X$ such that
tensoring by $\ms N$ gives a equivalence between the Ulrich bundles with
respect to $\phi$ and the Ulrich bundles with respect to $\phi'$.
\end{cor}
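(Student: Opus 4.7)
The plan is to use the characterization of Ulrich bundles given by Proposition~\ref{who is Ulrich}: a coherent sheaf $V$ is Ulrich with respect to a degree-$d$ map $\psi: X \to \mbb P^1$ if and only if $V$ is a semistable vector bundle of slope $d + g - 1$ and $H^0(X, V \otimes \psi^\ast\ms O_{\mbb P^1}(-1)) = 0$. Thus we need to build a line bundle $\ms N$ whose tensoring operation preserves both conditions when swapping $\phi$ for $\phi'$.

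First I would observe that tensoring by a line bundle $\ms N$ is an autoequivalence of the category of coherent sheaves on $X$ (with inverse given by $\ms N^{-1}$), and that it preserves semistability while shifting slope by $\deg \ms N$. Hence to preserve the slope condition $\mu V = d + g - 1$, the line bundle $\ms N$ must have degree $0$. Second, to match the two vanishing conditions, I would look for $\ms N$ such that, for every $V$, one has an isomorphism
\[
V \otimes \ms N \otimes {\phi'}^\ast\ms O(-1) \;\cong\; V \otimes \phi^\ast\ms O(-1).
\]
This forces (and is achieved by) the choice
\[
\ms N \;=\; {\phi'}^\ast\ms O_{\mbb P^1}(1) \otimes \phi^\ast\ms O_{\mbb P^1}(-1),
\]
and indeed $\deg \ms N = d + (-d) = 0$, so the slope condition is also preserved.

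With this $\ms N$ in hand, the verification is immediate: if $V$ is Ulrich for $\phi$, then $V \otimes \ms N$ is semistable of slope $d + g - 1$, and
\[
H^0\bigl(X, (V \otimes \ms N) \otimes {\phi'}^\ast\ms O(-1)\bigr) \;=\; H^0\bigl(X, V \otimes \phi^\ast\ms O(-1)\bigr) \;=\; 0,
\]
so $V \otimes \ms N$ is Ulrich for $\phi'$. Symmetrically, tensoring by $\ms N^{-1} = \phi^\ast\ms O(1) \otimes {\phi'}^\ast\ms O(-1)$ sends Ulrich bundles for $\phi'$ to Ulrich bundles for $\phi$, producing a quasi-inverse functor.

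There is no serious obstacle here once one recognizes the right candidate; the only "choice" is to notice that the discrepancy between the two $H^0$ conditions is precisely the difference of the two tautological line bundles pulled back from $\mbb P^1$, and that this discrepancy is a degree-$0$ line bundle because $\phi$ and $\phi'$ have the same degree, so the slope condition remains intact. The result is then a direct invocation of Proposition~\ref{who is Ulrich} in both directions.
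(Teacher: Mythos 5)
Your proposal is correct and follows essentially the same route as the paper: both identify $\ms N$ as the degree-$0$ line bundle $\phi'^{\ast}\ms O_{\mbb P^1}(1)\otimes\phi^{\ast}\ms O_{\mbb P^1}(-1)$ measuring the discrepancy between the two pulled-back tautological bundles, and then invoke Proposition~\ref{who is Ulrich} to transfer the vanishing condition, with the slope/semistability conditions untouched since $\deg\ms N=0$. Your write-up is in fact slightly more careful than the paper's, which leaves the preservation of semistability and slope implicit.
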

\begin{proof}
Let $\ms L, \ms L'$ be the pullbacks of $\mbb O_{\mbb P^1}(-1)$ under
$\phi$ and $\phi'$ respectively. Since these are both degree $d$ line
bundles, we can write $\ms L \otimes \ms N = \ms L'$ for some line bundle
$\ms N$ of degree $0$. By Proposition~\ref{who is Ulrich}, we then find that a coherent
sheaf $V/X$ is Ulrich with respect to $\phi$ if and only if $H^0(X, V
\otimes \ms L^\vee) = 0$. But
\[V \otimes \ms L^\vee = V \otimes (\ms L' \otimes \ms N^\vee)^\vee = (\ms N \otimes V) \otimes \ms L'^\vee\]
and so $H^0(X, V
\otimes \ms L^\vee) =
H^0(X, (\ms N \otimes V)
\otimes \ms L'^\vee)$
and we see that $V/X$ is Ulrich with respect to $\phi$ if and only if
$\ms N \otimes V$ is Ulrich with respect to $\phi'$.
\end{proof}
It follows that the the stack of Ulrich bundles is independent of
the specific morphism $\phi$, and only depends on its degree $d$. In
particular, by Theorem~\ref{good azumaya love}(\ref{field small
specializations}), the center of the reduced Clifford algebra
$\CAlg\phi^{\text{red}}$ and its Brauer class over its center only depend
on $d$ and not on the specific choice of $\phi$.

The relative Brauer map and related period-index obstruction
(Definition~\ref{obstruction def}) have been the
subject of a great deal of arithmetic investigations
(see, for example \cite{LangTate,Sha,CiKra,Clark:PI,Lich:PI,Roq:Curves}). An
interesting aspect of the study of the Clifford algebra is that it gives
another concrete interpretation of this morphism, and understanding
its specializations can yield nontrivial arithmetic
information about a curve.

In this direction we give a result on the specializations of the Clifford
algebra of a curve. In \cite{HaileHan} and \cite{Haile:CABC}, it is shown that
in certain cases of a Clifford algebra associated to a genus $1$
hyperelliptic or plane cubic curve, the Clifford algebra specializes to any
division algebra of degree $2$ or $3$ respectively, which is split by the
function field of the genus $1$ curve. The following result gives a natural
generalization of this result for general curves.
\begin{prop} \label{special division}
Let $X/k$ be a geometrically integral smooth projective curve of index $d$
over $k$, which admits a degree $d$ morphism $\phi: X \to \mbb P^1$, and
suppose $D$ is a division algebra of degree $d$ over its center $k$, such that $D_{k(X)}$ is
split. Then there is a specialization $\CAlg\phi \to D$.
\end{prop}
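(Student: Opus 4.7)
The plan is to reduce the construction of a specialization $\CAlg\phi \to D$ to a geometric question on the Picard scheme of $X$. By Corollary~\ref{standard azumaya}, any $k$-algebra homomorphism $\CAlg\phi \to D$ factors through the reduced Clifford algebra $\CAlg\phi^{\text{red}}$ and arises as a specialization at a $k$-point of the center $\Spec\ms Z$. By Theorem~\ref{good azumaya love}(\ref{field azumaya center},\ref{field small specializations}), the $k$-points of $\Spec\ms Z$ classify the degree-$d$ Azumaya algebras over $k$ produced as such specializations, and the Brauer class of the specialization at $v$ is the image of $v$ under the natural map $\Spec\ms Z \to \Br(k)$. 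Combining this with the Ulrich identification of subsection~\ref{clifford gerbe}, $\Spec\ms Z$ becomes an open subscheme of $\shPic X{d+g-1}$, namely the complement of the generalized $\Theta$-divisor of Remark~\ref{theta remark}, and the Brauer class attached to a point $q$ is $\alpha_X|_q$.

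Hence it suffices to exhibit a $k$-point $q \in \shPic X{d+g-1}(k)$ off the generalized $\Theta$-divisor with $\alpha_X|_q = [D]$. I would construct such a $q$ in two steps. First, since $D_{k(X)}$ is split, $[D] \in \Br(k(X)/k)$, which by the exact sequence preceding Remark~\ref{obstruction degree} is the image of the Brauer map $\alpha_X\colon \shPic X{}(k) \to \Br(k)$; thus $[D] = \alpha_X|_{q_0}$ for some $q_0 \in \shPic X{n_0}(k)$. Translating $q_0$ by the $k$-rational line bundles $\phi^\ast \ms O(1)$ and $\omega_X$ (of degrees $d$ and $2g-2$, both in $\Pic(X)$ and hence with trivial Brauer class) shifts its degree without altering $\alpha_X|_\cdot$. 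Combining this with the cyclic description $\Br(k(X)/k)/\Br_0(k(X)/k) \cong \Z/(d/\per X)$ from Remark~\ref{obstruction degree}, the hypothesis $\ind D = d = \ind X$, and a Lichtenbaum-style identification of the coset of $[D]$ in this cyclic quotient, I would reduce the compatibility of the target degree $d + g - 1$ with the coset of $[D]$ to the divisibility $d \mid 2(g-1)$, which holds since $\omega_X \in \Pic(X)$ has degree $2g-2$.

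Second, once a $k$-point with the correct Brauer class is located, the set of all such points forms a coset of $\ker\bigl(J(X)(k) \to \Br_0(k(X)/k)\bigr)$ acting by translation; a density argument (possibly after a mild scalar extension followed by descent) then lets us perturb within this coset to land outside the proper closed generalized $\Theta$-divisor. The principal obstacle I anticipate is the arithmetic reconciliation in the first step --- verifying that the index constraint $\ind D = d$ and the degree congruence for $d + g - 1 \pmod d$ are compatible --- which ultimately rests on the interplay among $\ind X$, $\per X$, and the cyclic structure of $\Br(k(X)/k)/\Br_0(k(X)/k)$.
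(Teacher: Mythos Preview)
Your strategy diverges from the paper's, and the gaps you flag are real and not resolved by your sketch. The paper does not attempt the arithmetic bookkeeping on $\shPic X{d+g-1}$ at all. Instead it works with twisted sheaves: fix a $\m_d$-gerbe $\ms G$ over $\Spec k$ with class $[D]$, write $D = \send(W)$ for a rank-$d$ $\ms G$-twisted sheaf $W$, and let $\ms X \to X$ and $\ms P \to \bP^1$ be the pullback gerbes. Since $D_{k(X)}$ is split, $\ms X$ carries an invertible twisted sheaf $\ms L$; set $V = \phi_\ast \ms L$. The crucial point is that $\rk V = d = \ind D$ is the minimal possible rank of a locally free $\ms P$-twisted sheaf, so $V$ is automatically semistable and $V_{\ov k} \cong \ms O(m)^d$. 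A torsor argument (density of rational points in affine space when $k$ is infinite, Lang's theorem when $k$ is finite) then yields $V(n) \cong W_{\ms P}$ over $k$ for some $n$, producing the algebra map $\phi_\ast \ms O_X \to D_{\bP^1}$ directly.

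Translated back into your framework, this single semistability step handles \emph{both} of your missing pieces. The equation $md = \deg V = \deg \ms L + 1 - g - d$ forces $\deg \ms L \equiv g - 1 \pmod d$, which is exactly the degree congruence you could not establish; your proposed reduction to the known divisibility $d \mid 2(g-1)$ is too weak, since it leaves a possible ambiguity of $d/2$ in the residue, and nothing in your outline explains how the hypothesis $\ind D = d$ would select the correct one. Moreover, once a point with obstruction class $[D]$ sits in degree $d+g-1$, the same semistability forces its pushforward to be geometrically trivial, so it is automatically off the generalized $\Theta$-divisor: your second step is therefore unnecessary, and your density-then-descent sketch for it would in any case break down when $\Pic^0(X)$ happens to be finite.
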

\begin{rem} \label{special remark}
It follows from \ref{who is Ulrich} that such a class $[D]$ must arise
as the obstruction class for the gerbe $\stPic X{d + g - 1} \to \shPic
X{d + g - 1}$.
\end{rem}
\begin{proof}[Proof of Proposition~\ref{special division}]
Let $\ms G\to\spec k$ be a $\m_d$-gerbe representing the Brauer
class of $D$, and write $D=\send(W)$ for a $\ms G$-twisted sheaf $W$
of rank $d$.  Let $\ms P\to\bP^1$ and $\ms X\to X$ be the pullbacks
of $\ms G$ to $\bP^1$ and $X$.  Let $\ms L$ be a $\ms X$-twisted
invertible sheaf, and let $V:=\phi_\ast\ms L$ be the pushforward
$\ms P$-twisted sheaf.

We claim that there is an integer $n$ and an isomorphism $V(n) \simto
W_{\ms P}$.  Note that $V$ is naturally a $\phi_\ast\ms O_{\ms X}$-module,
giving a map $\phi_\ast\ms O_{\ms X}\to\send(W)$.  The claim thus yields a
map $\phi_\ast\ms O_{\ms X}\to\send(W_{\ms P})=D_{\ms P}$.  Since both
sheaves have trivial inertial action, this is the pullback of a unique map
$\phi_{\ast}\ms O_X\to D_{\bP^1}$, which by Corollary~\ref{standard
azumaya}, comes from a specialization of the Clifford algebra $\ms
C_{X/\mbb P^1} \to D$.

It remains to prove the claim. For this, note that $V$ is a $\ms
P$-twisted sheaf of minimal rank (as the index of $\ms P$ is equal to $d$,
which is the rank of $V$). In particular, it follows that $V$ must be
stable of some slope $\mu$, since otherwise the Jordan-H\"older filtration
will yield a twisted sheaf of smaller
rank.  Further, it follows by the same reasoning that $V_{\ov k}$ must be
equal to its $\mu$-socle, the sum of its $\mu$-stable subsheaves. Hence,
$V$ must be geometricaly polystable, which, implies that $W\tensor\widebar
k=\ms L(m)^d$ for some fixed $m$ and an invertible $\ms P\tensor\widebar
k$-twisted sheaf $\ms L$ of degree $0$.

On the other hand, $W_{\ms P}$ is also a locally free $\ms P$-twisted sheaf
of rank $d$, hence also geometrically polystable.  It follows that there is
an integer $n$ such that $V(n)\tensor\widebar k$ and $W_{\ms
P}\tensor\widebar k$ are isomorphic over $\ms P\tensor\widebar k$.  The
space $I:=\isom(V(n),W_{\ms P})$ is thus a right $\aut(W)$-torsor which is
open in the (positive-dimensional) affine space $\Hom(V(n),W)$.  If $k$ is
infinite, it follows that $I$ has a rational point (as the rational points
are dense in any open subset of an affine space); if $k$ is finite, then
$I$ has a rational point because any torsor under a smooth connected
$k$-group scheme of finite type is split by Lang's theorem
\cite[Theorem~2]{Lang:AGFF}. In either case, we see that $V(n)$ and $W_{\ms
P}$ are isomorphic, verifying the claim.
\end{proof}

\subsection{Genus $1$ curves}

For the remainder, we will focus on the case where $X$ is a curve of genus
$1$ over $k$.  Let us begin with the following theorem, which illustrated
the connection between our Clifford algebras and the arithmetic of genus
$1$ curves:

\begin{thm} \label{genus 1 observations}
Suppose that $X/k$ is a genus $1$ curve of index $d > 1$. Then
\begin{enumerate}[1. ]
\item \label{genus 1 map}
$X$ admits a
degree $d$ finite morphism $\phi : X \to \mbb P^1$.
\item \label{ulrich genus 1 locus}
For such a morphism, the Ulrich locus of $\stPic X{}$ lies within the
component $\stPic Xd$.
\item \label{degree 0 equiv}
We have an equivalence of stacks $\stPic Xd \cong \stPic X0$.
\item \label{obstruction not special}
No specialization of the Clifford algebra is a period-index obstruction
algebra for $X$ (as in Definition~\ref{obstruction def}).
\end{enumerate}
\end{thm}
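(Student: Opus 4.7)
The plan is to handle the four parts in order, each feeding into the next, with part~(\ref{obstruction not special}) as the arithmetic payoff.

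For part~(\ref{genus 1 map}), I would use Riemann--Roch. Since $d = \ind X$, there is an effective $k$-divisor $D$ on $X$ of degree $d$. On a genus~$1$ curve, $K_X \cong \ms O_X$, so $h^1(\ms O_X(D)) = h^0(\ms O_X(-D)) = 0$ for $d \geq 1$, and Riemann--Roch gives $h^0(\ms O_X(D)) = d \geq 2$. Choose a two-dimensional subspace $V \subset H^0(\ms O_X(D))$; if its base locus $B$ had positive degree $e$, then $V$ would descend to a basepoint-free $k$-pencil inside $H^0(\ms O_X(D - B))$, yielding an effective $k$-divisor of degree $d - e < d$ and contradicting the minimality of $d$. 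Hence the pencil is basepoint-free and defines a finite morphism $\phi : X \to \bP^1$ of degree $d$.

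For part~(\ref{ulrich genus 1 locus}), I would apply Proposition~\ref{who is Ulrich}: any Ulrich line bundle $L$ with respect to $\phi$ has $\mu L = \deg L = d + g - 1 = d$, so $L \in \stPic X d$. Part~(\ref{degree 0 equiv}) then follows by tensoring with the $k$-rational line bundle $M := \phi^\ast \ms O_{\bP^1}(1)$ of degree $d$, which gives an isomorphism of stacks $\stPic X 0 \simto \stPic X d$.

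For the payoff, part~(\ref{obstruction not special}), I would combine the above with Section~\ref{clifford gerbe} and Corollary~\ref{standard azumaya}: any degree $d$ specialization $\CAlg\phi \to B$ corresponds to a Brauer class of the form $\alpha_X|_x$ for some Ulrich line bundle $x$. By part~(\ref{ulrich genus 1 locus}), $x \in \shPic X d(k)$, and by part~(\ref{degree 0 equiv}) we may write $x = y + [M]$ with $y \in J(X)(k) = \shPic X 0(k)$ and $[M] \in \Pic X$ the class of $M$ above. Then
\[
\alpha_X|_x \;=\; \alpha_X|_y + \alpha_X|_{[M]} \;=\; \alpha_X|_y,
\]
since $[M]$ lifts to $\Pic X$ and hence $\alpha_X|_{[M]} = 0$. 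By definition $\alpha_X|_y$ lies in $\Br_0(k(X)/k)$, so $\alpha_X|_x$ is trivial in $\Br(k(X)/k)/\Br_0(k(X)/k)$, i.e., not an obstruction class.

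The main obstacle I anticipate is being careful in part~(\ref{genus 1 map}) that the base locus of a $k$-rational pencil descends to a genuine $k$-divisor (which it does, as the intersection of divisors of $k$-rational sections), so that the minimality contradiction is valid over $k$ rather than merely geometrically. The rest is bookkeeping with the Brauer--Severi/gerbe identification already set up in Section~\ref{clifford gerbe}.
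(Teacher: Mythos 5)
Your proposal is correct and follows essentially the same route as the paper: Riemann--Roch for the degree-$d$ pencil, the slope computation from Proposition~\ref{who is Ulrich} for the Ulrich locus, translation by a $k$-rational degree-$d$ class for the equivalence of stacks, and the observation that the resulting Brauer classes land in $\Br_0(k(X)/k)$ and hence are not obstruction classes. You merely spell out two details the paper leaves implicit (basepoint-freeness of the pencil via minimality of the index, and the additivity computation $\alpha_X|_x = \alpha_X|_y$), which is fine.
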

\begin{proof}
For part \ref{genus 1 map},
we note that for such a curve, if we choose a divisor $D$ of degree $d$,
then by Riemann-Roch, $h^0(D) = \deg D \geq 2$, and so we see that we can
find a pencil of effective divisors providing a degree $d$ morphism $\phi :
X \to \mbb P^1$.

Part~\ref{ulrich genus 1 locus} follows from Proposition~\ref{who is
Ulrich}(\ref{Ulrich slope}).

For part~\ref{degree 0 equiv} observe that, since $X$ has a effective
divisor of degree $d$ defined over $k$, by adding and subtracting that
point, we obtain an equivalence of stacks $\stPic Xd \cong \stPic X0$.

Finally, part~\ref{obstruction not special} is now a consequence of
Remark~\ref{special remark}.
\end{proof}

\subsection{Decomposability of the period-index obstruction}

We consider now the concept of decomposability of algebras. We recall that
a central simple algebra $A$ is decomposable if it can be written as $A
\cong B \otimes C$ for two nontrivial algebras $B, C$. Circumstantial
evidence would seem to suggest that algebras which have index ``maximally
different'' from the period should be decomposable. Results of this type
have been obtained by Suresh (\cite[Theorem~2.4]{Suresh:SLGC}, see also
\cite[Remark~4.5]{BMT}),  over fields such as $\mbb Q_p(t)$.  We begin by
giving the following weakening of the standard notion of decomposability:
\begin{defn}
We say that a central simple algebra $A$ is weakly decomposable if
there exist central simple algebras $B, C$ of degree greater than $1$, such
that $\deg B, \deg C$ divide but are strictly less than $\deg A$ and $A$ is
Brauer equivalent to $B \otimes C$.
\end{defn}

\begin{rem}
We note that in the case that $A$ is a weakly decomposable algebra of
degree $p^2$, then $A$ is in fact decomposable.
\end{rem}

\begin{prop} \label{weak decomp}
Let $X/k$ be a smooth genus $1$ curve. Then every class in the relative Brauer group $\Br(k(X)/k)$ can be
written as $[B \otimes D]$ where $\ind B, \ind D | per X$. In particular,
if $\ind X \neq \per X$, and $A$ is a division algebra of index $d$ whose
class is in $\Br(k(X)/k)$, then $A$ is weakly decomposable.
\end{prop}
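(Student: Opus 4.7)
The plan is to exploit the structure of $\Br(k(X)/k)$ as the image of the obstruction map $\shPic{X}{}(k) \to \Br(k)$, $P \mapsto \alpha_X|_P$, together with the action of the Jacobian on the components of $\shPic{X}{}$. The heart of the argument is the following index bound: whenever $Q$ is a $k$-point of $\shPic{X}{p}$, where $p = \per X$, the Brauer class $\alpha_X|_Q$ has index dividing $p$. Granted this, the multiplicativity of $\alpha_X$ under the group law on the Picard scheme lets us split any relative Brauer class as a sum of two such obstruction classes.

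First I would observe that $\shPic{X}{p}(k)$ is nonempty: by the definition of the period, the torsor class of $\shPic{X}{1}$ in $H^1(k, J(X))$ has order $p$, so $\shPic{X}{p} \cong p \cdot \shPic{X}{1}$ is the trivial torsor and carries a $k$-point $Q_0$. For the index bound, I would let $\ms G \to \Spec k$ be a $\mathbb G_m$-gerbe of class $\alpha_X|_Q$ and set $\ms X := \ms G \times_{\Spec k} X$. The tautological property of $\alpha_X$ as the obstruction to lifting $Q$ to a line bundle on $X$ means that $Q$ lifts to an $\ms X$-twisted invertible sheaf $\ms L$ of degree $p$. Since $g(X) = 1$ and $\deg \ms L = p \geq 1$, Riemann--Roch on the geometric fiber gives $h^0 = p$ and $h^1 = 0$; cohomology and base change (applied in the gerby setting) then ensures that $(\pi_X)_\ast \ms L$ is a locally free $\ms G$-twisted sheaf of rank $p$ on $\Spec k$, which forces $\ind \alpha_X|_Q \mid p$.

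With the index bound in hand, I would take any $[A] \in \Br(k(X)/k)$ and write $[A] = \alpha_X|_P$ for some $P \in \shPic{X}{mp}(k)$ with $m \in \Z$. Freeness of the $J(X)$-action on $\shPic{X}{mp}$ lets us write $P = Q_0^{\otimes m} \otimes T$ for a unique $T \in J(X)(k)$, and multiplicativity of the obstruction gives
\[
[A] \;=\; (m-1)\,\alpha_X|_{Q_0} \;+\; \alpha_X|_{Q_0 \otimes T}.
\]
The second summand has index dividing $p$ by the key lemma applied to $Q_0 \otimes T \in \shPic{X}{p}(k)$, and the first has index dividing $\ind \alpha_X|_{Q_0}$ (since an integer multiple of a Brauer class has index dividing that of the class itself), which again divides $p$. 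Choosing central simple algebras $B$ and $D$ representing these two summands yields the desired decomposition.

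For the ``in particular'' statement, if $d := \ind X > p$ and $A$ is a division algebra of index $d$ with $[A] \in \Br(k(X)/k)$, then inflating $B$ and $D$ to have degree exactly $p$ (by tensoring with matrix algebras) produces degrees dividing $d$ (since $p \mid d$) and strictly less than $d$, verifying weak decomposability. The principal obstacle is making the key lemma precise in the twisted setting: one must verify that $(\pi_X)_\ast \ms L$ is a \emph{locally free} $\ms G$-twisted sheaf of the predicted rank, not merely a coherent sheaf with the correct generic rank, but this reduces to the standard cohomology-and-base-change package applied to twisted sheaves on the gerbe $\ms X$.
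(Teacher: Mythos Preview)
Your proof is correct and takes a genuinely different route from the paper's. The paper argues via the Clifford algebra: for $A$ a division algebra of degree $d=\ind X$ split by $k(X)$, Proposition~\ref{special division} realizes $A$ as a specialization of $\CAlg\phi$, whence Theorem~\ref{genus 1 observations}(\ref{obstruction not special}) forces $[A]\in\Br_0(k(X)/k)$; choosing any obstruction class $D$ then makes both $[D]$ and $[A\otimes D^{op}]$ obstruction classes, and the index bound $\ind(\cdot)\mid\per X$ is imported from O'Neil \cite[Proposition~2.3]{ONeil:PI}. You instead bypass the Clifford machinery entirely, working directly with the homomorphism $\shPic X{}(k)\to\Br(k)$ and the action of $J(X)$ on the components $\shPic X{mp}$, and your ``key lemma'' is essentially a self-contained reproof of O'Neil's bound via Riemann--Roch for a twisted line bundle. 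What your route buys is a more elementary argument that treats the first assertion uniformly for \emph{every} class in $\Br(k(X)/k)$, whereas the paper's proof as written really only handles the maximal-index case $\ind A=\ind X$ needed for weak decomposability; what the paper's route buys is an illustration of how the Clifford algebra encodes the position of $[A]$ inside $\Br(k(X)/k)/\Br_0(k(X)/k)$, which is the theme of the surrounding section. One small technical remark: the class you must twist by to obtain the lift $\ms L$ is literally the gerbe class of $\stPic X p\to\shPic X p$ at $Q$, which the paper only identifies with $\alpha_X|_Q$ up to generating the same cyclic subgroup (see Section~\ref{clifford gerbe}); since such classes share splitting fields and hence index, your conclusion is unaffected.
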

\begin{proof}
If $A$ is a central simple algebra of degree $d$ which is not division then
the result is immediate. Therefore we may assume that $A$ is a central
division algebra of degree $d$, and let $D$ be any obstruction class for
$X$. By Proposition~\ref{special division}, $A$ is a specialization of the
Clifford algebra, and hence by Theorem~\ref{genus 1 observations} its class
must lie in $\Br_0(X/k)$ (see the discussion at the beginning of
Section~\ref{curves}), and therefore $A \otimes D^{op}$ must be a
period-index obstruction class.  By \cite[Proposition~2.3]{ONeil:PI}, we
find that $\ind D, \ind A \otimes D^{op} | \per X < d$. Setting $B$ to be a
division algebra Brauer equivalent to $A \otimes D^{op}$, we find $A \sim D
\otimes B$, showing that $A$ is weakly decomposable as desired.
\end{proof}

Using this, we find that the Clifford algebra is also weakly decomposable
in this situation.
\begin{cor} \label{clifford decomposability}
Let $X/k$ be a smooth genus $1$ curve, and let $\phi: X \to \mbb P^1$ be a
morphism of degree $\ind X$. Let $C$ be the specialization of the reduced
Clifford algebra $\ms C_\phi^{\text{red}}$ to the generic point of $\shPic
X{\ind X}$. If $\per X \neq \ind X$ then $C$ is weakly decomposable.
\end{cor}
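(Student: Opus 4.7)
Set $d=\ind X$ and $K=k(\shPic X d)$. By construction, $C$ is a degree-$d$ Azumaya algebra over $K$ arising as the generic fiber of $\ms C_\phi^{\text{red}}$ over $\ms Z\cong\shPic X d$. The plan is to apply Proposition~\ref{weak decomp} to the base-changed genus-$1$ curve $X_K:=X\times_k K$ over $K$ and the Brauer class $[C]\in\Br(K)$.

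First I would show that $[C]\in\Br(K(X_K)/K)$. By Lemma~\ref{clifford base change}, $\ms C_\phi\tensor_k K\cong\ms C_{\phi_K}$, so $C$ is a degree-$d$ specialization of $\ms C_{\phi_K}^{\text{red}}$ at a $K$-point of $\stAzRep{\phi_K}{d}$. Invoking Corollary~\ref{standard azumaya} for $\phi_K:X_K\to\mbb P^1_K$ together with the identification of such specializations with obstruction classes for Ulrich line bundles given in Section~\ref{clifford gerbe}, it follows that $[C]$ is split by the function field $K(X_K)$ and hence lies in the relative Brauer group.

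Next I would apply Proposition~\ref{weak decomp} to $X_K/K$ and the class $[C]$, obtaining a Brauer identity $[C]=[B]+[D]$ with $\ind B,\ind D\mid\per X_K$. Since base change can only decrease the period, $\per X_K\mid\per X$, and the hypothesis $\per X\neq d$ combined with $\per X\mid d$ forces $\per X<d$; therefore $\ind B,\ind D\leq\per X<d$ and both divide $d$. Observe also that $d$ is non-prime: on a genus-$1$ curve $\per X=1$ implies $\ind X=1$, so under our hypotheses $\per X\geq 2$, yielding a proper non-trivial divisor of $d$.

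Finally I would upgrade this Brauer identity to a weak decomposition. In the generic case both $\ind B,\ind D\geq 2$, and the division-algebra representatives of $[B],[D]$ have degrees in $\{2,\ldots,\per X\}$, each a divisor of $d$ strictly between $1$ and $d$. If instead one summand, say $[D]=0$, is trivial, then $\ind C=\ind B\leq\per X<d$, and using the non-primality of $d$ I would build a weak decomposition by hand, taking (for example) a representative of $[C]$ of degree $\per X$ paired with a trivial matrix algebra of degree $d/\per X$, both in $\{2,\ldots,d-1\}$ and dividing $d$. The principal technical content is the first step, establishing that the generic Clifford specialization has Brauer class split by the function field of the base-changed curve; the remaining steps are a direct application of Proposition~\ref{weak decomp} combined with elementary case analysis enabled by the non-primality of $d$.
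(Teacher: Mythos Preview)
Your proposal is correct and follows essentially the same route as the paper: base change to the function field $K=k(\shPic X{\ind X})$, identify $C$ as lying in the relative Brauer group $\Br(K(X_K)/K)$ via the discussion in Section~\ref{clifford gerbe}, and then invoke Proposition~\ref{weak decomp}. The paper's own proof is terser---it simply asserts that the result follows from Proposition~\ref{weak decomp} once $[C]\in\Br(K(X_K)/K)$ is established---whereas you unpack the conclusion by using the first sentence of Proposition~\ref{weak decomp} directly and handling the degenerate case (one Brauer summand trivial) by hand via the non-primality of $d$. That extra care is reasonable, since applying the ``in particular'' clause of Proposition~\ref{weak decomp} to $X_K$ would require knowing $\ind X_K\neq\per X_K$, which is not addressed; your workaround sidesteps this cleanly.
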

\begin{proof}
Let $L = k(\shPic X{\ind X})$ be the function field of $\shPic X{\ind X}$,
and let $\eta \in \shPic X{\ind X}(L)$ be the generic point. Regarding $C$
as the specialization of $\ms C_{\phi_L}^{\text{red}}$ to $\eta \in \shPic
X{\ind X}(L) = \shPic {X_L}{\ind X}(L)$, it follows from
Section~\ref{clifford gerbe} that $C \in \Br(X_L/L)$, and the result now
follows from Proposition~\ref{weak decomp}.
\end{proof}

\appendix

\section{Explicit constructions} \label{examples}

In this appendix, we relate our Clifford algebra functor (and therefore our
constructed Clifford algebras) to the more classical constructions in the
literature and their natural generalizations.  With this in mind, we
present Clifford algebras in a number of generalizations of previously seen
contexts, each time showing how previous constructions fit within this
description.  It turns out that all the existing descriptions of Clifford
algebras can be all seen as particular examples of the Clifford algebra
associated to projection of a hypersurfaces in certain weighted projective
spaces. We finish by giving an explicit version of the general existence
proof for such Clifford algebras, giving an explicit presentation for such
Clifford algebras.

To set notational conventions, we will assume that all rings and algebras are
associative and unital and their homomorphism are unital.  For a ring $R$,
we let $R\langle x_1, \ldots, x_n \rangle$ denote the free associative
algebra over $R$ generated by the $x_i$.

\subsection{The Clifford algebra of a homogeneous polynomial}
Recall, if $f$ is a degree $d$ homogeneous polynomial in the variables
$x_1, \ldots, x_n$, following \cite{Roby}, we define the Clifford algebra
of $f$, denoted $C(f)$ by
\[C(f) = k \langle a_1, \ldots, a_n \rangle/I\]

where $I$ is the ideal generated by the coefficients of the variables
$x_i$ in the expression
\[(a_1 x_1 + \cdots + a_n x_n)^d - f(x_1, \ldots, x_n) \in k\langle
a_1, \ldots, a_n \rangle [x_1, \ldots, x_n].\]

\begin{prop}
Suppose $f$ is a degree $d$ homogeneous polynomial in the variables
$x_1, \ldots, x_n$. Let $X$ be the hypersurface defined by the
equation $x_0 ^d - f(x_1, \ldots, x_n)$, and let $\phi : X \to
\bP^{n-1}$ be the degree $d$ morphism given by dropping the $x_0$-coordinate.
Then $\CFun\phi$ is represented by the algbra $C(f)$.
\end{prop}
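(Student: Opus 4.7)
The plan is to directly compute the Clifford functor $\CFun\phi(\ms B) = \sHom_{\Alg{\ms O_Y}}(\phi_\ast\ms O_X, \pi_Y^\ast\ms B)$ and identify it with $\Hom_{k\text{-alg}}(C(f), B)$. Since we have a cover of $Y = \bP^{n-1}$ by the affine opens $D_+(x_j)$, and since everything in sight is local on $Y$, it suffices to describe such algebra maps chart-by-chart and then check that the glueing data is precisely what is encoded in Roby's construction.

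First, I would write down $\phi_\ast\ms O_X$ explicitly. On $D_+(x_j)$, setting $z = x_0/x_j$ and using the affine coordinates $x_i/x_j$ ($i \neq j$), we have
\[ (\phi_\ast\ms O_X)|_{D_+(x_j)} = k[x_1/x_j,\ldots,\widehat{x_j/x_j},\ldots,x_n/x_j][z]/\bigl(z^d - f(x_1/x_j,\ldots,x_n/x_j)\bigr).\]
Hence an $\ms O_Y$-algebra map $\phi_\ast\ms O_X \to \pi_Y^\ast\ms B$ on $D_+(x_j)$ is determined by an element $a_j$ in $B[x_i/x_j]_{i\neq j}$ subject to $a_j^d = f(x_1/x_j,\ldots,x_n/x_j)$.

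Next, I would examine the glueing. On the overlap $D_+(x_i)\cap D_+(x_j)$, compatibility of the two specifications forces $x_i a_i = x_j a_j$, which is exactly the condition that the $a_j$'s be the restrictions to the affine opens of a single global section of $\pi_Y^\ast\ms B\otimes\ms O_Y(1)$. Using the standard identification $H^0(\bP^{n-1},\ms O(1)) = kx_1\oplus\cdots\oplus kx_n$, such a global section is uniquely of the form $b_1 x_1 + \cdots + b_n x_n$ with $b_i\in B$. Under this encoding, each local relation $a_j^d = f(x_1/x_j,\ldots,x_n/x_j)$ becomes, after clearing denominators by $x_j^d$, the single homogeneous identity
\[(b_1 x_1 + \cdots + b_n x_n)^d = f(x_1,\ldots,x_n)\]
in the degree-$d$ part of $B\langle b_1,\ldots,b_n\rangle[x_1,\ldots,x_n]$, where the $b_i$ are treated as noncommuting elements of $B$.

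Finally, I would observe that this is exactly Roby's defining relation: the coefficients of the monomials in $x_1,\ldots,x_n$ on the left-hand side, equated with those of $f$, are precisely the generators of the ideal $I$ defining $C(f)$. Therefore giving an $\ms O_Y$-algebra map $\phi_\ast\ms O_X \to \pi_Y^\ast\ms B$ is the same as giving an $n$-tuple $(b_1,\ldots,b_n)\in B^n$ satisfying these relations, which is the same as giving a $k$-algebra homomorphism $C(f)\to B$. This yields a natural bijection $\CFun\phi(\ms B)\cong\Hom_{k\text{-alg}}(C(f),B)$ functorial in $\ms B$, proving that $C(f)$ represents $\CFun\phi$. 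The only nontrivial step is the bookkeeping of the glueing, but this is standard and dictated entirely by the degree-one twist inherent in the variable $x_0$.
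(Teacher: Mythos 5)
Your argument is correct and is essentially the paper's own: the paper proves this proposition as the $m=1$ special case of Theorem~\ref{explicit general clifford}, whose proof is exactly your chart-by-chart description of $\phi_\ast\ms O_X$ over the $D_+(x_j)$, the observation that the gluing condition $x_ia_i=x_ja_j$ forces the local images of $x_0/x_j$ to come from a global linear expression $b_1x_1+\cdots+b_nx_n$, and the identification of the resulting relations with Roby's ideal. The only cosmetic difference is that you package the gluing step via $H^0(\bP^{n-1},\ms O(1))$ while the paper deduces the degree bound on the $b_j$ directly from $b_ix_i^m=b_jx_j^m$.
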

\begin{proof}
This follows from Theorem~\ref{explicit general clifford}
\end{proof}

\subsection{Weighted Clifford algebras of homogeneous polynomials}
This construction is a generalization of the hyperelliptic Clifford
algebras introduced by Haile and Han in \cite{HaileHan}.

For positive integers $m, d$, let $f$ is a degree $md$ homogeneous
polynomial in the variables $x_1, \ldots, x_n$. We define the Clifford
algebra of $f$, weighted by $m$ denoted $C_m(f)$ by
\[C_m(f) = k \langle a_1, \ldots, a_n \rangle/I\]
where $I$ is the ideal generated by the coefficients of the variables
$x_i$ in the expression
\[\left(\sum_{|J| = m} a_J x^J\right)^d - f(x_1, \ldots, x_n) \in k\langle
a_J \rangle_{|J| = m} [x_1, \ldots, x_n],\]
where $x^J = x_1^{j_1}\cdots x_n^{j_n}$ ranges through all monomials
of degree $m$.

\begin{prop}
Suppose $f$ is a degree $md$ homogeneous polynomial in the variables
$x_1, \ldots, x_n$, and let $d$ be a positive integer. Consider $\mbb
P = \mbb P_{m, 1, \ldots, 1}$, a weighted
projective $n$ space. Let $X$ be the hypersurface defined by the
equation $x_0 ^d - f(x_1, \ldots, x_n)$, and let $\phi : X \to
\bP^{n-1}$ be the degree $d$ morphism given by dropping the $x_0$-coordinate.
Then $\ms C(\phi)$ is represented by the algbra $C_m(f)$.
\end{prop}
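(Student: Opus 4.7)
The plan is to reduce the statement to the universal property of $\CAlg\phi$ from Theorem~\ref{existence theorem} and then to match the resulting presentation with Roby's generators-and-relations description of $C_m(f)$.

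First, I would verify condition \weird. Since $S = \spec k$, it suffices to check that $\phi$ is finite locally free, which is evident over each standard affine chart $D_+(x_j) \subset \mbb P^{n-1}$: the coordinate $z_j := x_0/x_j^m$ satisfies the monic equation $z_j^d = f(x_1,\ldots,x_n)/x_j^{md}$, so $\phi_\ast\ms O_X|_{D_+(x_j)}$ is free on $1, z_j, \ldots, z_j^{d-1}$. Patching these trivializations with the transition $z_j = z_k (x_k/x_j)^m$ on overlaps yields the global $\ms O_Y$-module decomposition
\[
\phi_\ast\ms O_X \;\cong\; \bigoplus_{i=0}^{d-1}\ms O_Y(-im),
\]
in which $x_0$ corresponds to the canonical generator of the summand $\ms O_Y(-m) \hookrightarrow \phi_\ast\ms O_X$, and the $\ms O_Y$-algebra structure is determined by the relation $x_0^d = f(x_1,\ldots,x_n)$.

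Next, I would unwind $\CFun\phi(B) = \sHom_{\Alg{\ms O_Y}}(\phi_\ast\ms O_X, \pi_Y^\ast B)$ for a $k$-algebra $B$. Since $\phi_\ast\ms O_X$ is generated as an $\ms O_Y$-algebra by the inclusion $x_0 : \ms O_Y(-m) \hookrightarrow \phi_\ast\ms O_X$, an algebra homomorphism to $\pi_Y^\ast B$ is determined by the composition $\ms O_Y(-m) \to \pi_Y^\ast B$, equivalently a global section of $\pi_Y^\ast B \otimes \ms O_Y(m)$. Applying Lemma~\ref{projection} and the classical identification $H^0(\mbb P^{n-1}, \ms O(m)) = k[x_1,\ldots,x_n]_m$, such a datum is exactly an element
\[
\alpha \;=\; \sum_{|J|=m} a_J\,x^J \;\in\; B\otimes_k k[x_1,\ldots,x_n]_m,
\]
and the algebra relation $x_0^d = f$ translates into the single identity $\alpha^d = f(x_1,\ldots,x_n)$ in $B \otimes_k k[x_1,\ldots,x_n]_{md}$.

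Finally, I would match this with Roby's presentation. Expanding $\alpha^d - f$ in $B\otimes_k k[x_1,\ldots,x_n]_{md}$ yields, for each monomial $x^K$ of degree $md$, an expression $P_K(\{a_J\}) \in B$ which is a fixed non-commutative polynomial in the $a_J$; by the definition of $C_m(f)$, the defining ideal $I$ is generated by exactly these polynomials applied to the free generators in $k\langle a_J \rangle_{|J|=m}$. Consequently, giving such an $\alpha$ with $\alpha^d = f$ is the same as giving a $k$-algebra homomorphism $C_m(f) \to B$, producing a natural isomorphism $\Hom_{k\text{-alg}}(C_m(f), -) \cong \CFun\phi$; Theorem~\ref{existence theorem} together with Yoneda's lemma then yields $\CAlg\phi \cong C_m(f)$. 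The main obstacle is correctly tracking the weight-$m$ grading of $x_0$: one must confirm that $x_0$ generates the summand $\ms O_Y(-m)$ of $\phi_\ast\ms O_X$ (rather than being an ordinary section) so that its image under an algebra map lies in $\pi_Y^\ast B \otimes \ms O_Y(m)$ and is therefore forced to be a degree-$m$ expression in $x_1,\ldots,x_n$. Once that is pinned down, everything else is a direct unwinding of the universal properties from Theorem~\ref{existence theorem} and Lemma~\ref{free algebra}.
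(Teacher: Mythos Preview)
Your argument is correct. The paper's own proof is a one--line appeal to Theorem~\ref{explicit general clifford} (the weighted non-diagonal case), so the present proposition is obtained there by specializing $f_m = \cdots = f_{(d-1)m} = 0$, $f_{dm} = f$. Your route differs in two respects. First, you argue directly in this special case rather than passing through the more general non-diagonal statement. Second, and more substantively, you package the computation globally: you identify the splitting $\phi_\ast\ms O_X \cong \bigoplus_{i=0}^{d-1}\ms O_Y(-im)$ and then use $H^0(\bP^{n-1},\ms O(m)) = k[x_1,\ldots,x_n]_m$ together with the projection formula to parametrize the possible images of the generator $x_0$. The paper's proof of Theorem~\ref{explicit general clifford}, by contrast, works chart by chart: it tracks the local generator $x_0/x_i^m$ on each $D_+(x_i)$ and uses the cocycle relation $b_i\, x_i^m/x_j^m = b_j$ to bound the degree of its image and extract chart-independent coefficients $\beta_J$. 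Your approach is cleaner and makes the appearance of degree-$m$ monomials conceptually transparent (they arise as a basis of $H^0(\ms O(m))$), while the paper's chart-wise argument has the advantage of handling the full non-diagonal family $x_0^d = \sum_\ell x_0^{d-\ell} f_{\ell m}$ in one stroke without needing to describe the more complicated algebra structure on $\bigoplus_i \ms O(-im)$ in that generality.
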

\begin{proof}
This follows from Theorem~\ref{explicit general clifford}
\end{proof}

\subsection{Non-diagonal Clifford algebras of homogeneous polynomials}
This version of a Clifford algebra construction is due to Pappacena
\cite{Papp}. Particularly interesting case are the non-diagonal Clifford
algebras of a binary cubic form, studied by Kuo in \cite{Kuo}, and in
somewhat more generality by Chapman and Kuo in \cite{ChaKuo}.

For a positive integer $d$, suppose that we are given $f_1, f_2,
\ldots, f_d \in k[x_1, \ldots, x_n]$ where $f_i$ is homogeneous of
degree $i$. We define $C(f_1, \ldots, f_n)$ to be the associative
$k$-algebra given by
\[C(f_1, \ldots, f_n) = k \langle a_1, \ldots, a_n \rangle/I\]
where $I$ is the ideal generated by the coefficients of the variables
$x_i$ in the expression
\begin{multline*}
(a_1 x_1 + \cdots + a_n x_n)^d - (a_1 x_1 + \cdots + a_n x_n)^{d-1}
f_1(x_1, \ldots, x_n) \\ - (a_1 x_1 + \cdots + a_n x_n)^{d-2}
f_2(x_1, \ldots, x_n) - \cdots - f_d(x_1, \ldots, x_n) \in k\langle
a_1, \ldots, a_n \rangle [x_1, \ldots, x_n].
\end{multline*}

\begin{prop}
Suppose we are given polynomials $f_1, \ldots, f_n$ in the variables
$x_1, \ldots, x_n$, where $f_i$ is homogeneous of degree $i$. Let $X$
be the hypersurface in $\mbb P^n$ defined by the equation
\[ x_0^d = x_0^{d-1} f_1 + x_0^{d-2} f_2 + \cdots + f_d,\]
and let $\phi : X \to \bP^{n-1}$ be the degree $d$ morphism given by
dropping the $x_0$-coordinate.  Then $\CFun\phi$ is represented by
the algbra $C(f_1, \ldots, f_d)$. In particular, $C(f_1, \ldots, f_d) \cong
\CAlg\phi$.
\end{prop}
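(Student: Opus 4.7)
The plan is to apply Theorem~\ref{existence theorem} directly: first verify that $\phi$ satisfies condition \weird, then identify, by hand, the functor
\[ B \longmapsto \Hom_{\Alg{\ms O_Y}}(\phi_\ast\ms O_X,\, \pi_Y^\ast B)\]
on $k$-algebras $B$ with $\Hom_{\Alg k}(C(f_1,\ldots,f_d),\, B)$. Together these give the canonical isomorphism $\CAlg\phi \cong C(f_1,\ldots,f_d)$.

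Since the defining polynomial is monic in $x_0$, the center $[1\!:\!0\!:\!\cdots\!:\!0]$ of the projection does not lie on $X$, so $\phi$ is a well-defined finite morphism of constant degree $d$. Over the standard affine $U_i=\{x_i\ne 0\}\subset\bP^{n-1}$, dehomogenization yields
\[ \phi_\ast\ms O_X|_{U_i} \;\cong\; \ms O_{U_i}[t]\big/\bigl(t^d-\tilde f_1 t^{d-1}-\cdots-\tilde f_d\bigr),\]
with $t=x_0/x_i$ and $\tilde f_j=f_j/x_i^j$, exhibiting $\phi_\ast\ms O_X$ as locally free of rank $d$ with basis $1,x_0,\ldots,x_0^{d-1}$. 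Gluing these bases via the cocycle $t^{(i)}=(x_j/x_i)t^{(j)}$ gives $\phi_\ast\ms O_X\cong\bigoplus_{r=0}^{d-1}\ms O_{\bP^{n-1}}(-r)$, whence
\[ \pi_\ast\bigl((\phi_\ast\ms O_X)^\vee\bigr) \;\cong\; \bigoplus_{r=0}^{d-1} k[x_1,\ldots,x_n]_r,\]
which is finite-dimensional over $k$ and whose formation commutes with arbitrary base change. Thus \weird\ is satisfied.

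Next, an $\ms O_Y$-algebra homomorphism $\phi_\ast\ms O_X\to\pi_Y^\ast B=B\otimes_k\ms O_Y$ is determined by the image of the generator $x_0$; via the projection formula applied to $\phi^\ast\ms O_Y(1)=\ms O_X(1)$, the element $x_0$ is a global section of $\phi_\ast\ms O_X\otimes\ms O_Y(1)$, so its image lies in
\[ H^0\bigl(Y,\,B\otimes_k\ms O_Y(1)\bigr) \;=\; B\otimes_k k[x_1,\ldots,x_n]_1,\]
which we write as $g=\sum_i a_i x_i$ with $a_i\in B$. The algebra-homomorphism condition is precisely that the single defining relation $x_0^d=\sum_{j=1}^d f_j\, x_0^{d-j}$ of $\phi_\ast\ms O_X$ be preserved, i.e.\
\[ \Bigl(\sum_i a_i x_i\Bigr)^{d} \;=\; \sum_{j=1}^d f_j(x_1,\ldots,x_n)\Bigl(\sum_i a_i x_i\Bigr)^{d-j} \quad\text{in }B\otimes_k k[x_1,\ldots,x_n]_d. \]

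Since $k[x_1,\ldots,x_n]_d$ is $k$-free on the monomials $x^J$, equating coefficients of each $x^J$ transforms this identity into precisely the family of relations in $B$ generating the defining ideal $I$ of $C(f_1,\ldots,f_d)$. Hence tuples $(a_1,\ldots,a_n)\in B^n$ satisfying the above are in natural bijection with $k$-algebra maps $C(f_1,\ldots,f_d)\to B$, and we conclude $\CAlg\phi\cong C(f_1,\ldots,f_d)$ by the uniqueness clause of Theorem~\ref{existence theorem}. The principal bookkeeping point is the twist: $x_0$ is not a global section of $\phi_\ast\ms O_X$ itself but of $\phi_\ast\ms O_X\otimes\ms O(1)$, so the universal image $g$ carries an unavoidable linear factor in the $x_i$, and the relation one must match is a polynomial identity in $x_1,\ldots,x_n$ rather than a single element of $B$.
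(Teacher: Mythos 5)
Your proof is correct and is essentially the argument the paper gives: the paper deduces this proposition from its Theorem~\ref{explicit general clifford} (the weighted case $m=1$), whose proof is exactly your computation carried out in the affine charts $U_i$ — there the gluing condition $b_i x_i = b_j x_j$ plays the role of your observation that the image of $x_0$ is a global section of $B\otimes\ms O_Y(1)$, i.e.\ a linear form $\sum a_i x_i$ with coefficients in $B$, after which matching the single monic defining relation coefficientwise yields the ideal $I$. Your global, twist-based packaging of that local bookkeeping is a clean equivalent, not a different method.
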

\begin{proof}
This follows from Theorem~\ref{explicit general clifford}
\end{proof}

\subsection{Weighted non-diagonal Clifford algebras of homogeneous
polynomials}

This construction is a common generalization of the previous
constructions. Suppose that we are given
$f_m, f_{2m}, \ldots, f_{dm} \in k[x_1, \ldots, x_n]$ where $f_i$ is
homogeneous of degree $i$. We define $C(f_m, \ldots, f_{dm})$ to be the
associative $k$-algebra given by
\[C(f_{m}, \ldots, f_{dm}) = k \langle a_J\rangle_{|J| = m}/I\]
where $I$ is the ideal generated by the coefficients of the variables
$x_i$ in the expression
\begin{equation} \label{clifford equation}
\left(\sum_{|J| = m} a_J x^J\right)^d =
\sum_{\ell = 1}^m \left(\left(\sum_{|J| = m} a_J x^J\right)^{d-\ell} f_{\ell
m}(x_1, \ldots, x_n) \right)\in k\langle a_J\rangle_{|J|=m} [x_1, \ldots, x_n].
\end{equation}

\begin{thm} \label{explicit general clifford}
Suppose that we are given $f_m, f_{2m}, \ldots, f_{dm} \in k[x_1,
\ldots, x_n]$ where $f_i$ is homogeneous of degree $i$. Let $X$ be the
hypersurface in the weighted projective space $\mbb P = \mbb P_{m, 1,
\ldots, 1}$ defined by the degree $md$ homogeneous equation
\[ x_0^d = x_0^{d-1} f_m + x_0^{d-2} f_{2m} + \cdots + f_{dm},\]
and let $\phi : X \to \bP^{n-1}$ be the degree $d$ morphism given by
dropping the $x_0$-coordinate.  Then $\CFun\phi$ is represented by
the algbra $C(f_m, \ldots, f_{md})$.
\end{thm}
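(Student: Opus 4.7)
The plan is to invoke the universal property of the Clifford algebra given by Theorem~\ref{existence theorem}, reducing the problem to showing that $C(f_m, \ldots, f_{md})$ and $\CAlg\phi$ represent the same functor on $k$-algebras. Concretely, I will exhibit for every $k$-algebra $B$ a natural bijection between $k$-algebra homomorphisms $C(f_m,\ldots,f_{md}) \to B$ and $\ms O_{\bP^{n-1}}$-algebra homomorphisms $\phi_\ast \ms O_X \to B \otimes_k \ms O_{\bP^{n-1}}$.

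First I would give an explicit description of $\phi_\ast \ms O_X$ as a sheaf of $\ms O_{\bP^{n-1}}$-algebras. Away from the projection center $[1:0:\ldots:0]$, the ambient weighted projective space $\bP = \bP_{m,1,\ldots,1}$ is the total space of the line bundle $\ms O_{\bP^{n-1}}(m)$ with $x_0$ as fiber coordinate, so pushing forward the short exact sequence on $\bP$ determined by multiplication by $x_0^d - \sum_\ell x_0^{d-\ell} f_{\ell m}$ yields a decomposition
\[\phi_\ast \ms O_X \cong \bigoplus_{i=0}^{d-1} \ms O_{\bP^{n-1}}(-im)\cdot x_0^i,\]
whose algebra structure is encoded by the defining relation $x_0\cdot x_0^{d-1}=\sum_{\ell=1}^d x_0^{d-\ell} f_{\ell m}$.

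Next I would translate algebra homomorphisms $\phi_\ast \ms O_X \to B \otimes_k \ms O_{\bP^{n-1}}$ into the combinatorial data cutting out $C(f_m,\ldots,f_{md})$. Such a map is determined by the image of $x_0$, which amounts to an $\ms O_{\bP^{n-1}}$-linear map $\ms O_{\bP^{n-1}}(-m)\to B \otimes_k \ms O_{\bP^{n-1}}$, equivalently a global section $L\in B\otimes_k H^0(\bP^{n-1},\ms O(m))=B\otimes_k k[x_1,\ldots,x_n]_m$. Writing $L=\sum_{|J|=m} a_J x^J$, the requirement that this assignment extend to an algebra map is precisely that $L^d=\sum_\ell L^{d-\ell}f_{\ell m}$ holds in $B\otimes_k k[x_1,\ldots,x_n]$; comparing coefficients of monomials in the $x_i$ then recovers exactly the ideal $I$ defining $C(f_m,\ldots,f_{md})$. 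Combining this bijection with Yoneda's lemma and Theorem~\ref{existence theorem} finishes the proof.

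The main obstacle will be the first step: producing the decomposition of $\phi_\ast \ms O_X$ cleanly while keeping track of the $\ms O(-im)$ twists arising from the weight $m$ on $x_0$, and confirming that $X$ avoids the singular point of $\bP$ so that $\phi$ is genuinely a finite flat morphism of constant degree $d$ satisfying condition \weird. Once this geometric input is in hand, the remainder is a formal manipulation of universal properties.
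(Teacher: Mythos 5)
Your proposal is correct and follows essentially the same route as the paper: the paper likewise reduces to showing that an $\ms O_{\bP^{n-1}}$-algebra map $\phi_\ast\ms O_X\to B\otimes_k\ms O_{\bP^{n-1}}$ is determined by the image of $x_0$, which is forced to be a degree-$m$ form $\sum_{|J|=m}\beta_J x^J$ with coefficients in $B$ subject to exactly the relations cutting out $C(f_m,\ldots,f_{dm})$. The only difference is presentational: the paper works chart-by-chart on the standard affines of $\bP_{m,1,\ldots,1}$ and glues (verifying along the way that $X$ misses the cone point, the issue you correctly flagged), rather than packaging the computation through the global decomposition $\phi_\ast\ms O_X\cong\bigoplus_{i=0}^{d-1}\ms O(-im)$.
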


\begin{proof}
To begin, let us examine the morphism $\phi$ in local coordinates. Let
$U_i \cong \mbb A^{n-1}$ be the affine open set of $\mbb P^{n-1}$
defined by the nonvanishing of the coordinate $x_i$, so that $U_i =
\Spec(R_i)$ where $R_i = k[x_1/x_i, \ldots, x_n/x_i] \subset k(\mbb
P^{n-1})$.  Similarly, let $V_i \subset \mbb P$ be defined by the
nonvanishing of the $x_i$ coordinate on $\mbb P$. If we write $V_i =
\Spec(S_i)$ then we have
\[S_0 = k[x^J/x_0]_{|J| = m}\]
where $x^J = x_1^{j_1} \cdots x_n^{j_n}$ is a monomial of degree $|J|
= m$. For $i \neq 0$, we have
\[S_i = k[x_0/x_i^m, x_1/x_i, \ldots, x_n/x_i]\]
which we note is just a polynomial ring in one variable, represented
by $x_0/x_i^m$ over the ring $R_i$. If we let $X_i = X \cap V_i$, then
we see that (via homogenization) $X_0$ is cut out by the equation
\[ 1 =  f_m/x_0 + f_{2m}/x_0^2 + \cdots + f_{dm}/x_0^d.\]
We claim that in fact $X_0 \subset \cup_{i = 1}^n X_i$, or in other
words, $X$ is contained in the union of the open sets $V_1, \ldots,
V_n$. To see this, suppose that $p \in X_0(L)$ is a point for some
field extension $L/F$. It follows that for some $J$ with $|J|=m$, we
have $x^J/x_0(p) \neq 0$, since otherwise we would have $f_{\ell
m}/x_0^\ell (p) = 0$ for each $\ell$ contradicting the equation above.
Now, we can choose $i$ with $j_i \neq 0$ --- i.e., so that $x_i$
appears with a nonzero multiplicity in the monomial $x^J$. We claim
that $p \in X_i$. But this follows by construction:  $p$ does not
lie in the zero set of the homogeneous polynomial $x_i$, the ideal of
which on the affine set $V_i$ contains the term $x^J/x_0^m$.

For $i \neq 0$, in the affine set $V_i = \Spec(S_i) =
\Spec(k[x_0/x_i^m, x_1/x_i, \ldots, x_n/x_i])$, the closed subscheme
$X_i$ is cut out by the equation
\[ (x_0/x_i^m)^d = (x_0/x_i^m)^{d-1} f_m(x_1/x_i, \ldots, x_n/x_i) +
\cdots + f_{dm}(x_1/x_i, \ldots, x_n/x_i).\]

Let $A = k \langle a_1, \ldots, a_n \rangle/I$, where the ideal $I$ is
as described above. We will show that $A$ represents the functor $\ms
C(\phi)$. For the first direction, note that we have a homomorphisms
of sheaves of $\ms O_{\mbb P^{n-1}}$-algebras $\phi_\ast \ms O_{X} \to A
\otimes_k \ms O_{\mbb P^{n-1}}$ given on the open set $U_i$ by the
inclusion
\[S_i \to A \otimes_k R_i\]
defined by sending $x_0/x_i^m$ to $\sum_{|J| = m} a_J \otimes x^J$. It
follows from multiplying the defining equation~(\ref{clifford
equation}) by $x_i^{-md}$ that this defines a homomorphism of
$R_i$-algebras.

Conversely, suppose that $B$ is any $k$-algebra, and that we have a
homomorphism of sheaves of $\ms O_{\mbb P^{n-1}}$-algebras $\phi_\ast \ms
O_{X} \to B \otimes_k \ms O_{\mbb P^{n-1}}$. Over the
open set $U_i$ write $b_i$ for the image of $x_0/x_i^m \in S_i$ in $B
\otimes_k R_i$, we see that since $b_i x_i^m/x_j^m = b_j \in B \otimes
R_j$, it follows that $b_i$, considered as a polynomial in $x_1/x_i,
\ldots, x_n/x_i$ can have degree no larger than $m$. In particular, we
may write
\[b_i x_i^m = \sum_{|J| = m} \beta_{J,i} x^J\]
and using the identity $b_i x_i^m = b_j x_j^m$, it follows that the
elements $\beta_{J,i} = \beta_J \in B$ do not in fact depend on $i$.
But now, $a_J \mapsto \beta_J$ defines a homomorphism $A \to B$
such that $S_i \to B \otimes R_i$ factors as
\[S_i \to A \otimes R_i \to B \otimes R_i\]
as desired.

\end{proof}

\nocite{Haile:CABC,Haile:CACSFF,Haile:CABCS}
\bibliographystyle{amsalpha}
\def\cprime{$'$} \def\cprime{$'$} \def\cprime{$'$} \def\cprime{$'$}
  \def\cftil#1{\ifmmode\setbox7\hbox{$\accent"5E#1$}\else
  \setbox7\hbox{\accent"5E#1}\penalty 10000\relax\fi\raise 1\ht7
  \hbox{\lower1.15ex\hbox to 1\wd7{\hss\accent"7E\hss}}\penalty 10000
  \hskip-1\wd7\penalty 10000\box7}
\providecommand{\bysame}{\leavevmode\hbox to3em{\hrulefill}\thinspace}
\providecommand{\MR}{\relax\ifhmode\unskip\space\fi MR }
\providecommand{\MRhref}[2]{%
  \href{http://www.ams.org/mathscinet-getitem?mr=#1}{#2}
}
\providecommand{\href}[2]{#2}

\end{document}